\newtheorem{thm}{Theorem}[section]
\newtheorem{lemma}[thm]{Lemma}
\newtheorem{cor}[thm]{Corollary}
\newtheorem{pro}[thm]{Proposition}
\newtheorem{conj}[thm]{Conjecture}
\title{A relation between Clar covering polynomial and cube polynomial\footnote{This
 work is  supported by  NSFC (10011083).}}
\author{Heping Zhang$^a$, Wai Chee Shiu$^b$, Pak Kiu Sun$^b$}
\date{\small $^a$School of Mathematics and Statistics, Lanzhou
University, Lanzhou,  Gansu 730000, China, E-mail: zhanghp@lzu.edu.cn\\
\vskip 2mm
$^b$Department of Mathematics, Hong Kong Baptist University, 224 Waterloo Road, Kowloon Tong, Hong Kong, China, E-mails: wcshiu@hkbu.edu.hk, lionel@hkbu.edu.hk}
\begin{document}

\maketitle
\begin{abstract}
The Clar covering polynomial (also called Zhang-Zhang polynomial in some chemical literature) of a hexagonal system is a counting polynomial for some types of resonant structures called Clar covers, which can be used to determine Kekul\'e count, the first Herndon number and Clar number, and so on. In this paper we find that the Clar covering polynomial of a hexagonal system $H$ coincides with the cube polynomial of its resonance graph $R(H)$ by establishing a one-to-one correspondence between the Clar covers of $H$ and the hypercubes in $R(H)$.  Accordingly, some applications are presented.
\vskip 2mm
\noindent{\bf Keywords}  Hexagonal system; Clar covering polynomial; Resonance graph; Cubic polynomial.

\end{abstract}

\section{Introduction}

A hexagonal or benzenoid system is a 2-connected finite plane graph such that
every interior face is a regular hexagon of side length one. It can also
 be formed by a cycle with its interior in the infinite hexagonal lattice on the plane (graphene).
A hexagonal system with a Kekul\'e structure (or perfect matching in graph theory) is called  Kekul\'ean and is viewed as the carbon-skeleton
of a benzenoid hydrocarbon. Based on the Kekul\'e structures, there are several classical models of benzenoid molecules relating with resonance energy (extra stability) such as Clar's aromatic sextet
theory \cite{clar1972} and Randi\'c conjugated circuit model \cite{r}.

In 1996, Zhang and  Zhang \cite{zz1} introduced the Clar covering polynomial
(Zhang-Zhang polynomial) of a hexagonal system, which
unifies some topological indices such as the Clar number, the
Kekul\'e count and the first Herndon number. Moreover, it is closely related to \cite{zz2} sextet polynomial, which introduced by Hosoya and Yamaguchi\cite{hy}.

A {\it generalized hexagonal systems} $H$ is a subgraph of a hexagonal system and a {\em Clar cover} is a spanning subgraph of $H$ such that every component of it is either a hexagon or an edge. The set of Clar covers of $H$ is called a {\em sextet pattern} or {\em resonant set} and a {\em Clar formula} is a maximum sextet pattern of $H$. Moreover, the {\em Clar number} $Cl(H)$ of $H$ is the size of a {\em Clar formula} of $H$. The \emph{Clar covering polynomial} of $G$ is defined as follows:

\begin{equation}\zeta(H,x)=\zeta(H)=\sum_{k=0}^{ Cl(H)}  z(H,k)x^{k},\end{equation}
where $z(H,k)$ is the number of Clar covers with $k$ hexagons of $H$.

The dependence of the topological resonance energy on  $\zeta(H,x)$ for some values of $x$ was examined in a series of papers \cite{GGRV,ggf,GGSF}. A recent survey \cite{ZGZ} and some articles \cite{CDG,CLW, CW,CW2,GB,gfb,ZJY} established basic properties of Clar covering polynomial and methods to compute it. In particular, Chou et al. \cite{CLW, CW} recently carried out an automatic computation program for the Clar covering polynomials of benzenoids and obtained many fruitful results.

The resonance graph $R(H)$ (also called Z-transformation graph) of a hexagonal system $H$ was introduced independently by Gr\"undler \cite{gw}, Zhang et al. \cite{zgc,zgc2} and Randi\'c \cite{r,r2}. It originates from Herndon's resonance theory \cite{h} to reflect an interaction between Kekul\'e structure of benzenoid hydrocarbons.  The vertex set of $R(H)$ is the set of perfect matchings of $H$. Two vertices being adjacent if  their symmetric difference forms a hexagon of $H$. It is evident that $z(H,0)$ equals the number of the vertices of $R(H)$ and $z(H,1)$, the first Herndon number, equals the number of edges of $R(H)$. The resonance graph was later extended \cite{zz} to bipartite plane graphs and \cite{z3} gives a survey on it recently.

The $n$-dimensional hypercube $Q_n$ (or simply $n$-cube) is the graph whose vertices are all binary strings of length $n$  and two vertices are adjacent if their strings differ exactly in one position. Bre\v{s}ar, Klav\v{z}ar and \v{S}krekovski \cite{BKS} introduced a counting polynomial of hypercubes of a graph $G$ as follows:
\begin{equation}C(G,x)=\sum_{i\geq 0}\alpha_i(G)x^i,
\end{equation}
where $\alpha_i(G)$ denotes the number of induced subgraphs of $G$ that is isomorphic to the $i$-cube $Q_i$.

An important new role of hypercubes of resonance graph was introduced in \cite{M,SKG,TZ}. Klav\v{z}ar et al.  \cite{KZG} proved that the Clar number $Cl(H)$ of $H$ is equal to the  largest $i$ such that $Q_i$ is a subgraph of $R(H)$.  Zhang and Zhang  \cite{zz1} showed that $z(H,Cl(H))$ equals the number of Clar formulas of $H$; and Salem et al. \cite{SKVZ} established a bijection between the Clar formulas of $H$ and the number of the largest hypercubes in $R(H)$.  These two results together imply that $z(H,Cl(H))$ equals to the number of largest hypercubes in $R(H)$.

To generalize the previous result,   we show that the Clar covering polynomial of a hexagonal system $H$ coincides with the cube polynomial of its resonance graph $R(H)$, that is \begin{equation}\label{eq.}\zeta(H,x)=C(R(H),x)).
\end{equation}
In particular, the coefficients of these two polynomials must be coincided and thus, $z(H,i)=\alpha_i(R(H))$ for each $i\geq 0$. The proof is accomplished by establishing a one-to-one correspondence between the Clar covers of $H$ and the hypercubes in $R(H)$.

For example, the Clar covering polynomial of a fibonacene with $n$ hexagons is equal to the cube polynomial of Fibonacci cube of order $n$. This correspondence also derives an isomorphism between partial orderings on the  Clar covers of $H$ and the hypercubes of $R(H)$.
By using such connection, some further applications are presented. From cube polynomial  the derivatives and properties of real roots of the Clar covering polynomial are produced; Motivated from Clar covering polynomial we derive a novel expression of cube polynomial of a median graph in power of $(x+1)$.  A more general conjecture about the unimodality of the sequence of coefficients of cube polynomial of a median graph is put forward.

 \section{An equality of two polynomials}

The objective of this section is to prove Equation (\ref{eq.}) and the following definitions can simplify our proof.

We first recall two important definitions from set theory and graph theory. The {\em symmetry difference} $A\oplus B$ of two sets $A$ and $B$ is equal to $(A\setminus B)\cup (B\setminus A)$. Let $G$ be a graph with a perfect matching $M$, a cycle of $G$ is $M$-{\em alternating} if its edges belong alternately to $M$ and not to $M$. Also, the edge set of $G$ is denoted by $E(G)$.

  Given any Kekul\'ean hexagonal system $H$. Let $\mathbb{Z}(H,n)$ be the set of Clar covers of $H$ with exactly $n$ hexagons. On the other hand, consider a graph $G$. The set of induced subgraphs of $G$ that isomorphic to $n$-cube $Q_n$ is denoted by $\mathbb{Q}_n(G)$. Combining these definitions from those in previous section, we have $z(H,n)=|\mathbb{Z}(H,n)|$ and $\alpha_n(G)=|\mathbb{Q}_n(G)|$.

To prove Equation (\ref{eq.}), it is sufficient to establish a bijection between $\mathbb{Z}(H,n)$ and $\mathbb{Q}_n(R(H))$ for each integer $n\geq 0$. To achieve our goal, let
\begin{equation} \label{map_eq}f: \mathbb{Z}(H,n)\rightarrow\mathbb{Q}_n(R(H))
\end {equation}
be a mapping and it is defined as follows: For each Clar cover $C\in \mathbb{Z}(H,n)$, consider those perfect matchings $M_1, M_2, \dots M_i$ in $H$ such that each hexagon in $C$ is $M_j$-alternating and each isolated edge in $C$ is in $M_j$ for all $j \in [1,i]$. Assign $f(C)$ as the induced subgraph of $R(H)$ with vertices $M_1, M_2, \dots M_i$.

\begin{figure}
\centering
\includegraphics[scale=0.6]{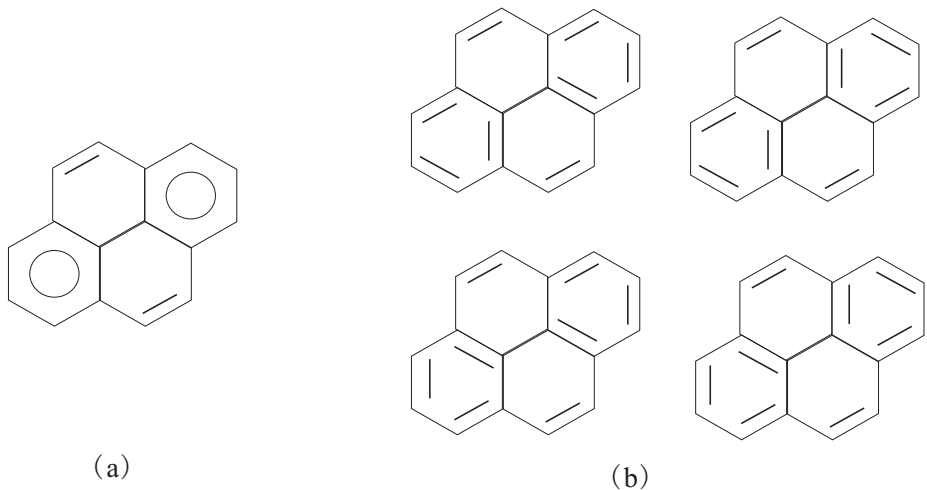}
\caption{(a) A Clar cover $C$ of pyrene, (b) The four perfect matchings in the image $f(C)$ which induces a square in the resonance graph.}\label{fig_map}
\end{figure}

 It is not difficult to show that $M_j \oplus E(C)$ only consists of perfect matchings of the hexagons in $C$.  Since each hexagon in $C$ has two perfect matchings, thus, $f(C)$ has exactly $2^n$ vertices. An example illustrating the definition of $f$ is given in Figure \ref{fig_map}; the hexagons with a circle inside represent the hexagons of $C$ and a double bond represent an isolated edge of $C$.

To facilitate the proof follows, we orientate the plane graph $H$ such that some edges are vertical. For a perfect matching $M$ of $H$, an $M$-alternating hexagon $s$ is called a {\em proper}  ({\em resp. improper}) {\em sextet} if the vertical double edge lies on the right (resp. left) (see Figure 2).

The following lemma shows that $f$ is a well-defined mapping.

\begin{lemma}\label{map}For each Clar cover $C\in \mathbb{Z}(H,n)$, we have $f(C)\in \mathbb{Q}_n(R(H))$.
\end{lemma}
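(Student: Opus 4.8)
The plan is to exhibit an explicit isomorphism between $f(C)$ and the $n$-cube $Q_n$, which (together with the fact that $f(C)$ is by construction the subgraph of $R(H)$ induced on the chosen vertex set) immediately gives $f(C)\in\mathbb{Q}_n(R(H))$. First I would pin down the vertex set. Since the $n$ hexagons of $C$ are vertex-disjoint components of a spanning subgraph, a perfect matching $M$ of $H$ satisfies the compatibility condition (each hexagon $M$-alternating, each isolated edge in $M$) precisely when $M$ contains all isolated edges of $C$ and restricts to one of the two perfect matchings on each hexagon; conversely every such choice yields a genuine perfect matching of $H$ because $C$ is spanning. Hence a compatible matching is determined by an independent binary choice on each of the $n$ hexagons, giving exactly $2^n$ vertices $M_1,\dots,M_{2^n}$, and a natural bijection with $V(Q_n)=\{0,1\}^n$: record for each hexagon whether $M$ induces the proper or the improper sextet there. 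The remark that $M_j\oplus E(C)$ consists only of perfect matchings of the hexagons of $C$ is exactly what underlies this count, since flipping every hexagon is one such choice.

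Next I would verify that this bijection is a graph isomorphism. Take two compatible matchings $M$ and $M'$ with binary strings $u,v\in\{0,1\}^n$. Because they agree on every isolated edge and differ on a hexagon exactly when its coordinate differs, the symmetric difference $M\oplus M'$ equals the disjoint union of the $6$-cycles of those hexagons on which $u$ and $v$ disagree. By the defining adjacency of $R(H)$, $M$ and $M'$ are adjacent iff $M\oplus M'$ is a single hexagon of $H$; since the hexagons of $C$ are vertex-disjoint, this disjoint union is a single hexagon iff $u$ and $v$ differ in exactly one coordinate, i.e. iff $u$ and $v$ are adjacent in $Q_n$. This equivalence is two-sided: differing in one coordinate forces adjacency, while differing in two or more coordinates forces non-adjacency, so the subgraph of $R(H)$ induced on $\{M_1,\dots,M_{2^n}\}$ carries exactly the edges of $Q_n$ and no others.

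The step I expect to be the crux is this adjacency analysis, and in particular the ``no extra edges'' direction: one must be sure that, for the \emph{induced} subgraph of $R(H)$, no additional adjacencies sneak in when $u$ and $v$ differ in $\ge 2$ positions. This is handled by the observation that the restriction of $M\oplus M'$ to each hexagon is either empty (agreement) or the full $6$-cycle (disagreement), and that distinct hexagonal components share no vertices, so $M\oplus M'$ can never be a single face-hexagon unless exactly one component is flipped. Once the adjacency is matched coordinate-for-coordinate, the map $M\mapsto u$ is a bijection preserving and reflecting edges, hence an isomorphism $f(C)\cong Q_n$, completing the proof that $f(C)\in\mathbb{Q}_n(R(H))$.
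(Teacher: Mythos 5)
Your proposal is correct and follows essentially the same route as the paper: both label each compatible perfect matching by a binary string recording proper versus improper sextets on the hexagons of $C$, and match adjacency in $R(H)$ with adjacency in $Q_n$ via the symmetric difference. Your explicit analysis of the vertex set (each binary choice yields a genuine perfect matching since $C$ is spanning) and of the ``no extra edges'' direction (when strings differ in two or more coordinates, $M\oplus M'$ is a disjoint union of several $6$-cycles and hence not a single hexagon) carefully fills in steps the paper merely asserts, namely that $b$ is a bijection and that the converse adjacency implication holds.
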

\begin{proof}It is sufficient to show that $f(C)$ is isomorphic to the $n$-cube $Q_n$. Let $h_1,h_2,...,h_n$ be the hexagons of $C$. For any vertex $M$ of $f(C)$ ($M$ is also a perfect matching of $H$), let $b(M)=b_1b_2\cdots b_n$, where $b_i=1$ or 0 according to whether $h_i$ is a proper or improper $M$-alternating hexagon with $i = 1,2, \dots n$. It is obvious that $b:V(f(C))\rightarrow V(Q_n)$ is a bijection. For $M'\in V(f(C))$, let $b(M')=b_1'b_2'\cdots b_n'$. If $M$ and $M'$ are adjacent in $f(C)$, which means they adjacent in $R(H)$, then $M\oplus M' = h_i$ for some $i \in [1,n]$. Therefore, $b_j=b_j'$ for each $j\not=i$ and $b_i\not=b_i'$, which implies $b_1b_2\cdots b_n$  and $b_1'b_2'\cdots b_n'$ are adjacent in $Q_n$. Conversely, if $b_1b_2\cdots b_n$  and $b_1'b_2'\cdots b_n'$ are adjacent in $Q_n$, it follows that $M$ and $M'$ are adjacent in $f(C)$. Hence $b$ is an isomorphism between $f(C)$ and $Q_n$. \end{proof}

\begin{figure}
\centering
\includegraphics[scale=0.6]{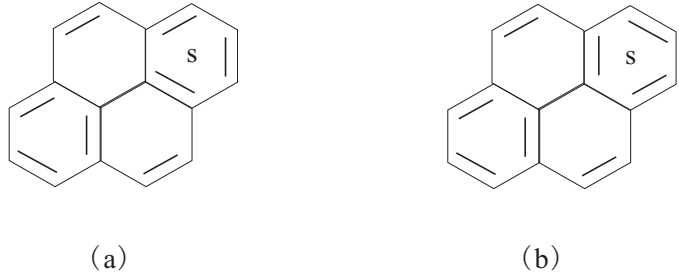}
\caption{(a) A proper sextet $s$, (b) An improper sextet $s$.}\label{proper}
\end{figure}
\begin{lemma} \label{inj} The mapping $f: \mathbb{Z}(H,n)\rightarrow\mathbb{Q}_n(R(H))$ is injective for each nonnegative integer $n$.
\end{lemma}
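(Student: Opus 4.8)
The plan is to prove injectivity by reconstructing the Clar cover $C$ from the vertex set of its image $f(C)$, so that distinct Clar covers are forced to have distinct images. Since $f(C)$ is an \emph{induced} subgraph of $R(H)$, it is completely determined by its vertex set, namely the collection of perfect matchings $M$ of $H$ for which every hexagon of $C$ is $M$-alternating and every isolated edge of $C$ lies in $M$. It therefore suffices to exhibit a rule that recovers $E(C)$, and hence $C$ itself, from this collection of matchings.

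First I would argue that every vertex $M$ of $f(C)$ satisfies $E(M)\subseteq E(C)$. Indeed, because $C$ is a \emph{spanning} subgraph, its hexagons and isolated edges partition the vertex set of $H$. Each isolated edge of $C$ lies in $M$ by definition and matches its two endpoints; on each hexagon $h_i$ of $C$, the requirement that $h_i$ be $M$-alternating forces the three alternate edges of $h_i$ into $M$, and these three edges match all six vertices of $h_i$ within the hexagon. Hence $M$ matches every vertex of $H$ using only edges of $C$, leaving no room for any edge outside $E(C)$.

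Next I would establish the union identity $\bigcup_{M\in V(f(C))}E(M)=E(C)$. The isolated edges lie in every such $M$ and so appear in the union. For a fixed hexagon $h_i$, Lemma \ref{map} supplies an isomorphism $b\colon V(f(C))\to V(Q_n)$ whose $i$-th coordinate records whether $h_i$ is a proper or improper sextet; since $b$ is onto $\{0,1\}^n$, there exist vertices $M$ of $f(C)$ realizing both values $0$ and $1$ at coordinate $i$, so both of the two alternating matchings of $h_i$ occur among the $M$'s. Their union is all six edges of $h_i$, whence every hexagon edge, and therefore all of $E(C)$, appears in the union.

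Combining the two containments yields $E(C)=\bigcup_{M\in V(f(C))}E(M)$, a quantity depending only on the induced subgraph $f(C)$. Finally, a Clar cover is determined by its edge set, because the components of $(V(H),E(C))$ are precisely the hexagons and isolated edges of $C$ and this decomposition into $6$-cycles and single edges is unique. Consequently $f(C_1)=f(C_2)$ forces $E(C_1)=E(C_2)$ and hence $C_1=C_2$, proving injectivity. I expect the first containment to be the main obstacle, since it is the step where one must rule out any matching edge lying outside $C$; this is exactly the point at which the spanning property of a Clar cover is indispensable.
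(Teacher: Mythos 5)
Your proof is correct, and it organizes the argument differently from the paper. The paper proceeds by a two-case separating-witness argument on distinct Clar covers $C$ and $C'$: if they contain the same hexagons but different isolated edges, then $f(C)$ and $f(C')$ are disjoint subgraphs of $R(H)$; if some hexagon $h$ lies in $C$ but not in $C'$, then an edge $e$ of $h$ with $e\notin E(C')$ belongs to one of the two alternating matchings of $h$, hence to some vertex $M_1$ of $f(C)$, while no vertex of $f(C')$ contains $e$, so $M_1\notin V(f(C'))$. You instead build an explicit left inverse, recovering the cover from its image via $E(C)=\bigcup_{M\in V(f(C))}E(M)$, which disposes of both cases at once and makes visible exactly where the spanning property of a Clar cover enters. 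Both arguments ultimately rest on the same key containment $E(M)\subseteq E(C)$ for every vertex $M$ of $f(C)$ --- the paper invokes it implicitly (``from the definition of the function $f$, $e$ is thus unsaturated\ldots''), whereas you prove it carefully from the partition of $V(H)$ into hexagons and isolated edges --- together with the fact that both alternating matchings of each hexagon occur among the vertices of $f(C)$. Your reconstruction has the added benefit of exhibiting $f$ as injective \emph{by inversion}, not merely by distinguishing pairs. One small streamlining: rather than citing the coordinate labeling $b$ from inside the proof of Lemma \ref{map}, you can observe directly that for any vertex $M$ of $f(C)$ and any hexagon $h_i$ of $C$, the matching $M\oplus E(h_i)$ again satisfies the defining conditions of $V(f(C))$, so both alternating matchings of $h_i$ occur; this keeps your argument independent of the cube structure of the image.
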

\begin{proof}  Given any distinct Clar covers $C$ and $C'$ in $\mathbb{Z}(H,n)$. If $C$ and $C'$ contain the same set of hexagons, then the isolated edges of $C$ and $C'$ are distinct. Therefore, $f(C)$ and $f(C')$ are disjoint induced subgraphs of $R(H)$ and thus  $f(C)\not=f(C')$. Suppose $C$ and $C'$ contain different sets of hexagons and let $h$ be a hexagon in $C \setminus C'$. Hence there at least one edge $e$ of $h$ does not belong to $C'$. From the definition of function $f$, $e$ is thus unsaturated by those perfect matchings that corresponding to the vertices in $f(C')$. However, there exist vertices $M_1$ and $M_2$ of $f(C)$ ($M_1$ and $M_2$ are perfect matchings of $H$) such that $M_1\oplus M_2=E(h)$ because $h$ is a hexagon in $C$. Hence $e$ is saturated by one of $M_1$ and $M_2$, say $M_1$. As a result, $M_1\notin V(f(C'))$ and $f(C)\not=f(C')$.
\end{proof}

The following  lemma is an obvious and known result.
\begin{lemma}\label{proper} For a perfect matching $M$ of $H$, the proper (resp. improper) $M$-alternating hexagons are pairwise disjoint.
\end{lemma}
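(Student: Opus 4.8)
The plan is to prove the statement for proper sextets; the corresponding claim for improper sextets then follows immediately by applying the left--right reflection of the plane, which preserves $M$-alternating hexagons while interchanging the two types. In fact I would prove the stronger assertion that no two proper $M$-alternating hexagons share a vertex, and I would get there in two stages: first reduce vertex-disjointness to edge-disjointness, and then rule out a common edge by a short case analysis on its orientation.

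For the reduction, I would use the fact that in a hexagonal system every vertex has degree $2$ or $3$, and that a hexagon passing through a vertex $v$ uses exactly two of the edges incident to $v$. Hence if two distinct hexagons $s_1$ and $s_2$ both contain $v$, then two $2$-element subsets of a set of at most three edges must intersect, so $s_1$ and $s_2$ already share an edge incident to $v$. Consequently it suffices to show that two proper sextets cannot share an edge.

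For the main step I would fix the orientation so that each hexagon has a well-defined left vertical and right vertical edge, labelling the six boundary edges as the left-vertical, right-vertical, and the four slanted edges NW, NE, SE, SW. An $M$-alternating hexagon restricts $M$ to one of the two perfect matchings of its boundary $6$-cycle, and being \emph{proper} selects exactly the one whose matched vertical edge is the right vertical edge; this fixes, for a proper sextet, which named boundary edges lie in $M$ and which do not. The heart of the argument is then the observation that a single physical edge shared by two adjacent hexagons occupies ``opposite'' positions in the two: a shared vertical edge is the right vertical edge of the left hexagon but the left vertical edge of the right one; a shared positively sloped edge is the SE edge of one hexagon and the NW edge of the other; and a shared negatively sloped edge is the NE edge of one and the SW edge of the other. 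In each of these three cases, reading membership in $M$ off the fixed proper matching from each side forces the shared edge to be simultaneously in $M$ and not in $M$, a contradiction. This rules out a common edge, and hence a common vertex, between two proper sextets.

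I expect the only delicate point to be the bookkeeping in this case analysis, namely correctly identifying, for each orientation of the shared edge, which named edge of $s_1$ and which named edge of $s_2$ it is, and then reading off its $M$-membership from the proper matching of each hexagon. There is no genuine analytic difficulty here: once the positional correspondence across a shared edge is set up carefully, all three cases collapse to the same contradiction, so the real work is purely in making the labelling unambiguous and the reflection symmetry explicit.
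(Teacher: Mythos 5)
Your proof is correct, and it is worth noting that it is not a variant of anything in the paper: the paper states this lemma without proof, introducing it as ``an obvious and known result'' (it goes back to the Z-transformation-graph literature), so your argument is a from-scratch verification. The bookkeeping in your case analysis checks out: a proper sextet places exactly the right vertical, NW and SW edges of its hexagon in $M$ (and excludes the left vertical, NE and SE edges), and an edge shared by two hexagons of the lattice occupies complementary positions --- right vertical of one versus left vertical of the other, NW of one versus SE of the other, SW of one versus NE of the other --- so in all three cases two proper sextets would force the shared edge simultaneously into and out of $M$. Two simplifications are available, though neither is necessary. First, your reduction from vertex-disjointness to edge-disjointness via the degree bound can be replaced by the standard fact, invoked verbatim in the paper's own proof of Lemma \ref{4-cycle}, that two distinct hexagons of a hexagonal system are either disjoint or share exactly one edge. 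Second, and more directly, if two $M$-alternating hexagons share a vertex $v$, then each of them contains the unique edge of $M$ covering $v$ (an alternating cycle through $v$ uses exactly one $M$-edge at $v$), so they already share an edge with no case analysis on degrees; this argument even works for alternating cycles in general, not just hexagons. Your reflection argument for the improper case is sound, since a left--right mirror of the plane preserves the property that the distinguished edges are vertical while exchanging proper and improper sextets.
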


\begin{lemma}[\cite{SKG}]\label{4-cycle} If the resonance graph $R(H)$ contains a 4-cycle $M_1M_2M_3M_4M_1$, then $h:=M_1\oplus M_2$ and $h':=M_1\oplus M_4$ are disjoint  hexagons. Also, we have $h=M_3\oplus M_4$ and $h'=M_2\oplus M_3$.
\end{lemma}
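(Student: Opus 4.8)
The plan is to argue directly with symmetric differences of the four perfect matchings together with the elementary geometry of hexagonal faces. By the definition of adjacency in $R(H)$, both $h = M_1 \oplus M_2$ and $h' = M_1 \oplus M_4$ are (edge sets of) hexagonal faces of $H$; since $M_2 \neq M_4$ are distinct vertices of the $4$-cycle, $h \neq h'$. I would first record the only fact about the plane structure that is needed: in a hexagonal system two distinct hexagonal faces meet in either nothing or a single common edge, since a shared vertex would force $2+2>3$ edges at that vertex, so vertex-sharing already entails edge-sharing. I would also note the identity $h \oplus h' = (M_1 \oplus M_2) \oplus (M_1 \oplus M_4) = M_2 \oplus M_4$ and, writing $B = M_2 \oplus M_3$ and $C = M_3 \oplus M_4$, the telescoping relation $h \oplus B \oplus C \oplus h' = \emptyset$, i.e. $B \oplus C = M_2 \oplus M_4 = h \oplus h'$.

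The heart of the argument is the disjointness of $h$ and $h'$, which I would prove by contradiction. Suppose $h$ and $h'$ share their unique common edge $e = uv$. A short local check shows $e \in M_1$: if $e \notin M_1$, then the endpoint matched inside one hexagon is forced to leave the other hexagon with both of its edges outside $M_1$, contradicting that the second hexagon is $M_1$-alternating. With $e \in M_1$, the fused pair $h \cup h'$ is a ``naphthalene'' whose only interior edge is $e$, so $h \oplus h'$ is its bounding $10$-cycle $L$. Now $M_3$ is a common neighbour of $M_2$ and $M_4$ distinct from $M_1$, so $M_3 = M_2 \oplus B = M_4 \oplus C$ for hexagonal faces $B \neq h$ and $C \neq h'$ with $B \oplus C = M_2 \oplus M_4 = L$. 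From $|L| = 10$ and $|B| = |C| = 6$ one gets $|B \cap C| = 1$, and a hexagonal face meeting $L$ in five of its six edges must run along five consecutive edges of $L$ closed by a single chord; the only chord available inside the naphthalene is $e$, and no external face can share five consecutive boundary edges (two hexagonal faces share at most one edge), so $\{B, C\} = \{h, h'\}$. Finally, flipping $h$ moves $e$ from $M_1$ to outside $M_2$, so in $M_2$ the vertex $v$ is matched by an edge of $h$ and both edges of $h'$ at $v$ lie outside $M_2$; hence $h'$ is not $M_2$-alternating. The only $M_2$-alternating face among $\{h, h'\}$ is therefore $h$, forcing $B = h$ and $M_3 = M_2 \oplus h = M_1$, which contradicts $M_3$ being a fourth distinct vertex. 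Thus $h \cap h' = \emptyset$.

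Once disjointness is in hand, $M_2 \oplus M_4 = h \cup h'$ is a disjoint union, and the last two equalities follow quickly. The common neighbour $M_3 \neq M_1$ again satisfies $M_3 = M_2 \oplus B = M_4 \oplus C$ with $B \oplus C = h \cup h'$, forcing $|B \cap C| = 0$; since $B$ and $C$ are connected $6$-cycles lying inside the two vertex-disjoint hexagons $h$ and $h'$, we must have $\{B, C\} = \{h, h'\}$. Here both $h$ and $h'$ are $M_2$-alternating and $M_4$-alternating (each is disjoint from the flipped hexagon, so the relevant matching agrees with $M_1$ on it), but $B = h$ would give $M_3 = M_2 \oplus h = M_1$; hence $B = h'$ and $C = h$, that is $M_2 \oplus M_3 = h'$ and $M_3 \oplus M_4 = h$, exactly the stated identities.

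I expect the only genuine obstacle to be excluding the shared-edge configuration in the second paragraph: it requires the local naphthalene analysis together with the observation that flipping one hexagon of a fused pair destroys the alternating property of the other, so no new common neighbour can arise. Everything else is bookkeeping with symmetric differences and the face-intersection dichotomy. As a possible shortcut, the preceding lemma that proper (resp.\ improper) $M$-alternating hexagons are pairwise disjoint could be invoked: two $M_1$-alternating faces sharing an edge would then be one proper and one improper sextet, and tracking this parity through the two flips gives an alternative route to the same contradiction.
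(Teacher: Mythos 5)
Your proof is correct, but it takes a genuinely different route from the paper's. Both arguments rest on the same two pillars---the telescoping identity $(M_1\oplus M_2)\oplus(M_2\oplus M_3)\oplus(M_3\oplus M_4)\oplus(M_4\oplus M_1)=\emptyset$ and the fact that two distinct hexagons share at most one edge---but the paper exploits them in a single counting stroke: writing $h_i=M_i\oplus M_{i+1}$ (indices mod $4$), if $h_0\neq h_2$ then $h_0$ shares at most one edge with each of $h_1,h_2,h_3$, so at least three edges of $h_0$ lie in exactly one of the four sets and survive in the supposedly empty symmetric difference, a contradiction; hence $h_0=h_2$ and $h_1=h_3$ at once, with no case analysis on how $h$ and $h'$ sit in the plane, and disjointness is then dispatched with a brief ``similar arguments.'' You invert the order: you prove disjointness first, via an explicit analysis of the fused naphthalene configuration (forcing $e\in M_1$, classifying the hexagons meeting the bounding $10$-cycle $L$ in five edges, and observing that flipping $h$ destroys the $M_2$-alternation of $h'$ at the shared vertex), and only afterwards identify $\{B,C\}=\{h,h'\}$ to obtain the two equalities. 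Your route is longer, but it is also more self-contained at the one point where the paper is compressed: once $h_0=h_2$ and $h_1=h_3$ the telescoping identity becomes vacuous, so the paper's disjointness step cannot literally be the same counting argument---one needs something like your local analysis, or the quicker remark that $h_1$ is alternating with respect to both $M_1$ and $M_2=M_1\oplus h_0$, so a shared edge would flip its whole alternating pattern and force $E(h_1)\subseteq M_1\oplus M_2=E(h_0)$, i.e.\ $h_1=h_0$. The trade-off is clear: the paper's counting trick yields $h=M_3\oplus M_4$ and $h'=M_2\oplus M_3$ before and independently of disjointness, while your ordering makes the identities depend on disjointness at the cost of the naphthalene case analysis; your closing suggestion to shortcut that case via Lemma \ref{proper} (two edge-sharing $M_1$-alternating hexagons would be one proper and one improper sextet) is also viable and closer in spirit to the paper's toolkit.
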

\begin{proof}Since $M_1M_2M_3M_4M_1$ is  a 4-cycle, the $h_i:=M_i\oplus M_{i+1}$, where the subscripts are module 4, are hexagons of $H$ and $h_i\not= h_{i+1}$ for each $i$. Moreover, the set $h_0\oplus h_1\oplus h_2\oplus h_3=\emptyset$ because symmetric difference is commutative and associative.  It is known that any two distinct hexagons in $H$ are either disjoint or having exactly one edge in common. If $h_0\not=h_2$, then $h_0$ has at most three edges in $E(h_1)\cup E(h_2)\cup E(h_3)$ and all other edges are contained in the above symmetry difference, which leads to a contradiction. Hence $h_2=h_0$ and using similar arguments, we have $h_1=h_3$ as well as $h_0$ and $h_1$ are disjoint.
\end{proof}

We now define an orientation of the directed resonance graph $\vec R(H)$: an edge $MM'$ is oriented from  $M$ to $M'$ if  $M\oplus M'$ is a proper sextet with respect to $M$ and an improper sextet with respect to $M'$. In fact $\vec R(H)$ is the Hasse diagram of a distributive lattice on the set of perfect matchings of $H$ \cite{z3}.

\begin{lemma}[\cite{z3,ZLS}]\label{directed} The directed resonance graph $\vec R(H)$ has no directed cycle.
\end{lemma}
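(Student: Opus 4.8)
The plan is to exhibit a real-valued potential on the perfect matchings of $H$ that strictly decreases along every directed edge of $\vec R(H)$. Once such a monovariant is in hand the claim is immediate: a directed cycle $M_1\to M_2\to\cdots\to M_k\to M_1$ would force the chain of strict inequalities $\Phi(M_1)>\Phi(M_2)>\cdots>\Phi(M_k)>\Phi(M_1)$, which is absurd. So the whole proof reduces to producing $\Phi$ and checking its monotonicity across one oriented edge.

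The monovariant I would use is tailored to the proper/improper dichotomy fixed when orienting $H$. Since some edges of $H$ are made vertical and every hexagon of a hexagonal system is a translate of one fixed regular hexagon, each hexagon has exactly two vertical edges, a left one and a right one. For a perfect matching $M$ set
\[
\Phi(M)=\sum_{\substack{e\in M\\ e\text{ vertical}}}x(e),
\]
where $x(e)$ is the horizontal coordinate of the vertical edge $e$. This is a finite real number, so it suffices to show that $\Phi$ strictly decreases across each oriented edge of $\vec R(H)$.

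The crux is a purely local computation on the flipped hexagon. In any $M$-alternating hexagon $h$ the six edges alternate between $M$ and $E(H)\setminus M$, so exactly one of the two vertical edges of $h$ lies in $M$; by the definition of a sextet, $h$ is a \emph{proper} $M$-sextet precisely when its right vertical edge is the matched one and \emph{improper} precisely when its left vertical edge is matched. Now take any directed edge $M\to M'$ of $\vec R(H)$. By definition $h:=M\oplus M'$ is a hexagon that is proper with respect to $M$ and improper with respect to $M'$, and $M'=M\oplus E(h)$. Passing from $M$ to $M'$ toggles the membership of all six edges of $h$ and alters nothing outside $h$; among vertical edges, the only ones affected are those of $h$, its right vertical edge leaving the matching and its left vertical edge entering it, while every other vertical matching edge is untouched. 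Hence $\Phi(M')-\Phi(M)$ equals the coordinate of the left vertical edge of $h$ minus that of its right vertical edge, a strictly negative constant (the negative hexagon width). Thus $\Phi(M')<\Phi(M)$ along every directed edge, and $\vec R(H)$ has no directed cycle.

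The only point requiring care is the local bookkeeping in the last paragraph—confirming that exactly the two vertical edges of the flipped hexagon change their matching status, and in the asserted direction—but this follows at once from the alternating structure of a sextet together with the proper/improper convention. I would expect no genuine obstacle beyond making this local check explicit; the reduction to a strictly decreasing potential is what makes acyclicity transparent, and it realizes the result recorded in \cite{z3,ZLS}.
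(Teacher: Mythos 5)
Your argument is correct, but note that the paper does not actually prove Lemma \ref{directed}: it imports the result from \cite{z3,ZLS}, where acyclicity is obtained as a byproduct of showing that $\vec R(H)$ is the Hasse diagram of a distributive lattice on the perfect matchings of $H$ (via a binary coding of 1-factors in \cite{ZLS}). Your potential-function route is a genuinely more elementary, self-contained substitute. The local bookkeeping you flag as the only delicate point does go through: in the fixed orientation every hexagon has exactly two vertical edges, and these are antipodal on the $6$-cycle, hence of opposite parity in any alternating matching of it, so exactly one is matched — which also confirms that every edge of $R(H)$ receives exactly one orientation, with ``proper'' meaning the right vertical edge is matched, exactly as the paper defines it. Flipping $h=M\oplus M'$ toggles only $E(h)$, whose four non-vertical edges are slanted and contribute nothing to $\Phi$, so $\Phi(M')-\Phi(M)=x(\text{left vertical of }h)-x(\text{right vertical of }h)=-\sqrt{3}$ (for unit hexagons), a strictly negative constant, and a directed cycle is impossible. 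What the lattice-theoretic proof in \cite{z3,ZLS} buys beyond acyclicity is the full order structure (existence of a source in any induced subdigraph, used implicitly in Lemma \ref{sur} via the partial order), but for the statement of Lemma \ref{directed} itself your monovariant argument is complete and arguably cleaner.
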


\begin{lemma}\label{sur} The mapping $f: \mathbb{Z}(H,n)\rightarrow\mathbb{Q}_n(R(H))$ is  surjective for each nonnegative integer $n$.
\end{lemma}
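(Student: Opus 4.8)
The plan is to show that every induced $n$-cube $Q$ in $R(H)$ equals $f(C)$ for a suitable Clar cover $C$ with $n$ hexagons, and I expect the crux to be attaching a single hexagon of $H$ to each coordinate direction of $Q$. Fix an isomorphism $Q\cong Q_n$ and call the resulting $n$ edge classes directions $1,\dots,n$. Each edge of $Q$ is an edge of $R(H)$, hence is the symmetric difference of its endpoints and so determines a hexagon of $H$. Since $Q$ is an \emph{induced} subgraph, every $2$-face of the cube is an induced $4$-cycle of $R(H)$, so Lemma \ref{4-cycle} applies to it and forces the two parallel edges of that face to carry the same hexagon. Because any two edges of $Q$ sharing a direction are linked by a chain of $2$-faces in which consecutive edges are the two parallel edges of a common face, transitivity makes all edges of direction $i$ carry one and the same hexagon $h_i$. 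This face-connectivity argument is where I expect the main work to lie.

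Next I would record two structural facts. Applying Lemma \ref{4-cycle} to a $2$-face spanning directions $i$ and $j$ shows $h_i$ and $h_j$ are disjoint, so $h_1,\dots,h_n$ are pairwise disjoint. Moreover, a path between any two vertices $M,M'$ of $Q$ only flips hexagons among the $h_i$, so $M\oplus M'\subseteq\bigcup_i E(h_i)$; hence all vertices of $Q$ induce the same perfect matching $F$ of $H-\bigcup_i V(h_i)$. By Lemma \ref{directed} the orientation $\vec R(H)$ restricted to $Q$ is acyclic and therefore has a source $M_0$, which serves as a canonical base matching in which the $h_i$ are the only (disjoint) alternating hexagons touched by the cube.

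I would then set $C:=\{h_1,\dots,h_n\}\cup F$. Because the $h_i$ are pairwise disjoint, each is $M$-alternating for every vertex $M$ of $Q$, and every edge of $F$ lies in each such $M$; thus $C$ is a spanning subgraph whose components are hexagons or edges, i.e.\ $C\in\mathbb{Z}(H,n)$. It remains to verify $f(C)=Q$. By definition $V(f(C))$ consists of all perfect matchings containing $F$ in which each $h_i$ is alternating, so every vertex of $Q$ belongs to $V(f(C))$ and $V(Q)\subseteq V(f(C))$. By Lemma \ref{map} we have $|V(f(C))|=2^n=|V(Q)|$, whence the two vertex sets coincide; as both $f(C)$ and $Q$ are induced subgraphs of $R(H)$ on this common set, $f(C)=Q$. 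This gives surjectivity.

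The only genuinely delicate point is the first step, namely that an entire direction class of $Q$ collapses to a single hexagon; once that is in hand the disjointness, the constancy of the matching off the hexagons, and the final cardinality comparison are routine bookkeeping.
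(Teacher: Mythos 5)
Your proposal is correct, and it reaches the conclusion by a genuinely different route from the paper. The paper's proof starts from the acyclic orientation: by Lemma \ref{directed} the cube $G_n$ has a source $M_0$, the $n$ out-edges at $M_0$ yield $n$ \emph{proper} $M_0$-alternating hexagons whose pairwise disjointness comes from Lemma \ref{proper}, and the identity $f(C)=G_n$ is then proved by induction on the distance from $M_0$, chasing $4$-cycles with Lemma \ref{4-cycle} to show every vertex equals $M_{I_F}=M_0\oplus\bigl(\bigcup_{i\in I_F}h_i\bigr)$. You instead work with the direction classes of the cube: constancy of the hexagon along each class via chains of $2$-faces (the standard $\Theta$-class argument for hypercubes), disjointness of $h_i$ and $h_j$ directly from Lemma \ref{4-cycle} applied to a face spanning directions $i$ and $j$, and a counting finish: $V(Q)\subseteq V(f(C))$ together with $|V(f(C))|=2^n$ from Lemma \ref{map} forces equality of the two induced subgraphs. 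Your route dispenses entirely with the orientation machinery --- your invocation of Lemma \ref{directed} and the source $M_0$ is in fact never used afterwards and can be deleted --- at the cost of the face-connectivity argument, which you correctly identify as the crux and correctly sketch (edges of direction $i$ correspond to vertices of a $Q_{n-1}$, adjacent base vertices share a $2$-face). The paper's approach buys an explicit coordinatization $M_I$ tied to the distributive-lattice structure of $\vec R(H)$; yours is more elementary in its ingredients and closer to the general partial-cube viewpoint. Two small slips worth fixing: inducedness of $Q$ is not needed to apply Lemma \ref{4-cycle}, which holds for any $4$-cycle of $R(H)$; and your stated reason that each $h_i$ is $M$-alternating (``because the $h_i$ are pairwise disjoint'') is misattributed --- the correct reason is that every vertex of $Q$ is incident with an edge of each direction class, and that edge realizes $M\oplus M''=E(h_i)$. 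Neither affects the validity of the argument.
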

\begin{proof} For any graph $G_n\in \mathbb{Q}_n(R(H))$, which is isomorphic to the $n$-cube $Q_n$, the corresponding oriented graph $\vec G_n$ in $\vec R(H)$ has no directed cycle by Lemma \ref{directed}.  Hence $\vec G_n$ contains a vertex $M_0$ (a source) with in-degree 0 and  out-degree $n$,  that is,  $\vec G_n$ has $n$ directed edges from $M_0$ to $M_1, M_2,\ldots,$ and $M_n$. Let $h_i=M_0\oplus M_i$ for $i=1,2,...,n$, the directed edges $M_0M_i$ implies that all $h_i$ are proper $M_0$-alternating hexagons. Moreover, Lemma \ref{proper} guarantees that all $h_i$ are pairwise disjoint. Therefore, we can obtain a Clar cover $C$ in $\mathbb{Z}(H,n)$ by regarding the $M_0$-alternating hexagons $h_1,h_2,...,h_n$ as components and all others edges of $M_0$ as isolated edge components.  It suffices to prove $f(C)=G_n$.

Let $[n]=\{1,2,...,n\}$ and for any $I\subseteq [n]$, let $\displaystyle M_I:=M_0\oplus (\displaystyle\bigcup_{i\in I}h_i)$. Then $V(f(C))=\{M_I: I\subseteq [n]\}$.  In particular, $M_0=M_{\emptyset}$ as well as $M_i=M_{\{i\}}$ for all $i$, and all of them belong to both $G_n$ and $f(C)$. On the other hand, since $G_n$ is isomorphic to the $n$-cube $Q_n$, every vertex $F$ of $G_n$ can be labeled with binary strings $b(F)=b_1b_2\cdots b_n$ such that $b(M_0)=00\cdots 0$ and for each $b(M_i)$, the $i$-th coordinate is 1 and the other coordinates are all zeroes. Hence, $F$ and $F'$ are adjacent in $G_n$ if and only if $b(F)$ and $b(F')$ are adjacent in $Q_n$.

For any $F\in V(G_n)$, let $I_F =\{i\in[n]: b_i=1\}$. We will show that $F=M_{I_F}\in V(f(C))$ by induction  on the distance $|I_F|$ between $M_0$ and $F$ in $G_n$. If $|I_F|=0$ or 1, then $F\in \{M_0, M_1, \dots M_n\}$. Now suppose $|I_F|\geq 2$, there exists $i,j \in I_F$ with $i\not=j$. Thus, there are vertices $F_0, F_1, F_2$ of $G_n$ such that $I_{F_0}=I_F\setminus \{i,j\}$, $I_{F_1}=I_F\setminus \{i\}$, and $I_{F_2}=I_F\setminus \{j\}$. By the induction hypothesis, we have $F_k=M_{I_{F_k}}$ for $k=0,1$ and 2. Therefore, $F_1\oplus F_0=M_{I_{F_1}}\oplus M_{I_{F_0}}=h_j$ and $F_2\oplus F_0=M_{I_{F_2}}\oplus M_{I_{F_0}}=h_i$. Since $F_0F_1FF_2F_0$ is a 4-cycle of $G_n$, it follows from Lemma \ref{4-cycle} that $F\oplus F_1=F_2\oplus F_0=h_i$. As a result, $ F=F_1\oplus h_i=(M_0\oplus ({\displaystyle\bigcup_{ k\in I_{F_1}} } h_k))\oplus h_i=M_0\oplus (\displaystyle\bigcup_{k\in I_{F}}h_i)=M_{I_F}$ and this implies that $V(f(C))=V(G_n)$ and hence $f(C)=G_n$.
\end{proof}

Combining Lemmas \ref{map}, \ref{inj} and \ref{sur}, we have the following corollary immediately.

\begin{cor}\label{bij} The function $f: \mathbb{Z}(H,n)\rightarrow\mathbb{Q}_n(R(H))$ is a bijection for every nonnegative integer $n$.
\end{cor}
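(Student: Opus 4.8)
The plan is simply to assemble the three preceding lemmas, since together they exhaust the conditions defining a bijection. Lemma~\ref{map} shows that for every Clar cover $C\in\mathbb{Z}(H,n)$ the image $f(C)$ is genuinely an induced $n$-cube of $R(H)$, so $f$ is a well-defined map with codomain $\mathbb{Q}_n(R(H))$. Lemma~\ref{inj} shows that $f$ is injective, and Lemma~\ref{sur} shows that $f$ is surjective. A map between two sets that is simultaneously well-defined, injective, and surjective is by definition a bijection, so the corollary follows at once for each fixed $n\geq 0$. In this sense there is nothing left to prove: the corollary is a one-line consequence of the three lemmas.

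I therefore expect no genuine obstacle at the level of the corollary itself, because the entire combinatorial weight has already been discharged. If I were tracing where the difficulty actually lies, it is in surjectivity (Lemma~\ref{sur}), where one must reconstruct a Clar cover $C$ from an arbitrary induced $n$-cube $G_n$ and then verify $f(C)=G_n$. That is the only place requiring real structure: the acyclicity of the oriented resonance graph $\vec R(H)$ (Lemma~\ref{directed}) is used to locate a source $M_0$ of $G_n$ with out-degree $n$; the disjointness of proper sextets (Lemma~\ref{proper}) guarantees that the $n$ hexagons $h_i=M_0\oplus M_i$ emanating from $M_0$ form a legitimate Clar cover; and the $4$-cycle description (Lemma~\ref{4-cycle}) drives the induction on $|I_F|$ that identifies each vertex $F$ of $G_n$ with the matching $M_{I_F}=M_0\oplus\bigcup_{i\in I_F}h_i$.

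Once the corollary is established, the main identity~(\ref{eq.}) is immediate by bookkeeping: taking cardinalities gives $z(H,n)=|\mathbb{Z}(H,n)|=|\mathbb{Q}_n(R(H))|=\alpha_n(R(H))$ for every $n\geq 0$, so that $\zeta(H,x)=\sum_n z(H,n)x^n=\sum_n\alpha_n(R(H))x^n=C(R(H),x)$. Thus the corollary is precisely the pivotal step that upgrades the coefficient-wise bijection into the claimed equality of the two polynomials.
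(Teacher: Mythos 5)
Your proposal is correct and matches the paper exactly: the paper also deduces the corollary immediately by combining Lemma~\ref{map} (well-definedness), Lemma~\ref{inj} (injectivity), and Lemma~\ref{sur} (surjectivity), with no additional argument needed. Your accompanying remarks about where the real work lies (in Lemma~\ref{sur}) and how the corollary yields Theorem~\ref{identity} are accurate as well.
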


Hence, we obtain the main result of this article as follows:

\begin{thm}\label{identity}For any Kekul\'ean hexagonal system $H$, we have $\zeta(H,x)=C(R(H),x)$.
\end{thm}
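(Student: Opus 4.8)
The plan is to read the theorem off directly from Corollary~\ref{bij}, since all of the substantive work has already been carried out in Lemmas~\ref{map}, \ref{inj} and \ref{sur}. First I would observe that, because $f\colon \mathbb{Z}(H,n)\to\mathbb{Q}_n(R(H))$ is a bijection for every nonnegative integer $n$, the two finite sets have equal cardinality; hence $z(H,n)=|\mathbb{Z}(H,n)|=|\mathbb{Q}_n(R(H))|=\alpha_n(R(H))$ for each $n\geq 0$. This is precisely the term-by-term equality of the coefficient sequences of the two polynomials.

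Next I would substitute this identity into their defining expressions. Writing $\zeta(H,x)=\sum_{k\geq 0} z(H,k)\,x^k$ and $C(R(H),x)=\sum_{i\geq 0}\alpha_i(R(H))\,x^i$, the equality $z(H,k)=\alpha_k(R(H))$ for all $k$ immediately yields $\zeta(H,x)=C(R(H),x)$. The only point worth a remark is the reconciliation of the two summation ranges: $\zeta$ is written as a sum up to $k=Cl(H)$, whereas $C$ ranges over all $i\geq 0$. This requires no separate argument, however, since no Clar cover can contain more than $Cl(H)$ pairwise disjoint hexagons, so $z(H,k)=0$ for $k>Cl(H)$, and the bijection then forces $\alpha_k(R(H))=0$ over the same range; both sequences vanish beyond $Cl(H)$, so the truncation is harmless.

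I do not expect any genuine obstacle at this final step: the entire difficulty resides in Lemmas~\ref{inj} and \ref{sur}---and most of all in the surjectivity argument, which leaned on the acyclicity of the oriented resonance graph $\vec R(H)$ (Lemma~\ref{directed}) and the rigid 4-cycle structure of Lemma~\ref{4-cycle}. Once the bijection of Corollary~\ref{bij} is in hand, the theorem is a one-line consequence, and it is exactly the identity~(\ref{eq.}) announced in the introduction.
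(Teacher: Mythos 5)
Your proposal is correct and follows exactly the paper's route: the theorem is deduced immediately from the bijection of Corollary~\ref{bij}, which gives $z(H,n)=\alpha_n(R(H))$ for all $n\geq 0$ and hence termwise equality of the two polynomials. Your added remark that both coefficient sequences vanish beyond $Cl(H)$, reconciling the summation ranges, is a harmless (and slightly more careful) elaboration of the same argument.
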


 Corollary \ref{bij} also implies the following result.

\begin{cor}[\cite{SKG}] Let $H$ be a Kekul\'ean hexagonal system. For any non-negative integer $k$, there exists a surjective mapping from the set of $k$-cubes of $R(H)$ to the set of resonant sets with $k$ hexagons.
\end{cor}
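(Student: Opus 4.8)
The plan is to obtain the desired surjection as the composite of two maps: the inverse of the bijection furnished by Corollary~\ref{bij}, followed by the natural ``forgetful'' map that records only the hexagons of a Clar cover. First I would invoke Corollary~\ref{bij}, which asserts that $f:\mathbb{Z}(H,k)\to\mathbb{Q}_k(R(H))$ is a bijection; hence its inverse $f^{-1}:\mathbb{Q}_k(R(H))\to\mathbb{Z}(H,k)$ is a bijection carrying the set of $k$-cubes of $R(H)$ onto the set $\mathbb{Z}(H,k)$ of Clar covers of $H$ having exactly $k$ hexagons.

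Next I would define the forgetful map $g$ sending each Clar cover $C\in\mathbb{Z}(H,k)$ to the set $S(C)$ of its $k$ pairwise disjoint hexagons, which by construction is a resonant set with $k$ hexagons. The crucial structural point is that $g$ is surjective but in general not injective: two distinct Clar covers sharing the same hexagon set differ only in the perfect matching chosen on the edges lying outside those hexagons, and such a matching on the complementary region need not be unique.

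To verify surjectivity I would take an arbitrary resonant set $S=\{h_1,\dots,h_k\}$ of $k$ pairwise disjoint hexagons. By the definition of a resonant set, the subgraph obtained by deleting the vertices of $h_1,\dots,h_k$ admits a perfect matching $M'$; taking $M'$ together with the hexagons $h_1,\dots,h_k$ as the components yields a Clar cover $C\in\mathbb{Z}(H,k)$ with $S(C)=S$. Thus every resonant set with $k$ hexagons lies in the image of $g$. Composing then gives $g\circ f^{-1}:\mathbb{Q}_k(R(H))\to\{\text{resonant sets with }k\text{ hexagons}\}$, which is surjective as a composition of a bijection with a surjection.

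The hard part here is not any calculation but getting the definitional bookkeeping exactly right: one must carefully distinguish a Clar cover (a spanning subgraph that records both the resonant hexagons \emph{and} the matching on the remainder) from a resonant set (which remembers only the hexagons), and recognize that the passage from the former to the latter is genuinely many-to-one. This many-to-one feature is precisely why the statement promises surjectivity rather than a bijection, and identifying it correctly is what prevents one from over-claiming.
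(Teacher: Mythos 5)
Your proposal is correct and takes essentially the same route as the paper, which states this corollary with no separate proof beyond the remark that it follows from Corollary~\ref{bij}: the intended argument is precisely your composite of $f^{-1}$ with the forgetful map sending a Clar cover to its set of hexagons. Your explicit check that the forgetful map is surjective (every resonant set extends to a Clar cover because the complement of its hexagons admits a perfect matching) correctly supplies the bookkeeping the paper leaves implicit.
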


\section{Maximal hypercubes}

We now consider maximal hypercubes in resonance graphs by establishing ordering relations on the hypercubes and the Clar covers.

For a hexagonal system $H$ with perfect matchings, let $\mathbb{Q}$ be the set of induced subgraphs of $R(H)$ that are hypercubes, thus $\mathbb{Q}= \displaystyle\bigcup_{n\geq 0}\mathbb{Q}_n(R(H))$. We define an ordering $\leq$ on $\mathbb{Q}$ as follows:  For any graphs $Q,Q'\in \mathbb{Q}$, we have
 $Q\leq Q'$ if $Q$ is a subgraph of $Q'$. Hence $(\mathbb{Q}, \leq)$ is a poset and the maximal hypercubes of $R(H)$ are the maximal elements of the poset $(\mathbb{Q}, \leq)$.

 Let $\mathbb{C}$ denote the set of all Clar covers of $H$, thus $\mathbb{C}=\displaystyle\bigcup_{n\geq 0}\mathbb{Z}(H,n)$. For any Clar covers $C,C'\in \mathbb{C}$, we have
 $C\leq C'$ if $f(C)\subseteq f(C')$. The reflexivity and transitivity of this binary relation are obvious and its antisymmetry follows from the bijection $f$ (Corollary \ref{bij}). As a result, $(\mathbb{C}, \leq)$ is also a poset.

 Moreover, the bijection $f$ from $(\mathbb{C}, \leq)$ to $(\mathbb{Q}, \leq)$  preserves the ordering relations and we obtain the following result.

 \begin{thm}The posets $(\mathbb{C}, \leq)$ and $(\mathbb{Q}, \leq)$ are isomorphic.
 \end{thm}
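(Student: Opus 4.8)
The plan is to prove that the bijection $f$ from Corollary \ref{bij} is itself the desired order isomorphism between $(\mathbb{C},\leq)$ and $(\mathbb{Q},\leq)$; no new map needs to be constructed. First I would verify that $f$ assembles into a single bijection $f\colon\mathbb{C}\to\mathbb{Q}$ across all dimensions at once. Since $\mathbb{C}=\bigcup_{n\geq 0}\mathbb{Z}(H,n)$ and $\mathbb{Q}=\bigcup_{n\geq 0}\mathbb{Q}_n(R(H))$ are partitioned, respectively, according to the number of hexagons in a Clar cover and the dimension of a hypercube, and since $f$ sends $\mathbb{Z}(H,n)$ into $\mathbb{Q}_n(R(H))$, every Clar cover $C$ lies in exactly one fiber $\mathbb{Z}(H,n)$ and its image $f(C)$ lies in the matching fiber $\mathbb{Q}_n(R(H))$. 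Corollary \ref{bij} supplies a bijection on each fiber, and because distinct fibers have disjoint images, these glue to a global bijection $f\colon\mathbb{C}\to\mathbb{Q}$.

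It then remains to check that $f$ both preserves and reflects the ordering, and this part is immediate from the way the relation on $\mathbb{C}$ was defined. By definition, for any $C,C'\in\mathbb{C}$ we have $C\leq C'$ precisely when $f(C)\subseteq f(C')$; and by the definition of $\leq$ on $\mathbb{Q}$, the containment $f(C)\subseteq f(C')$ is exactly the statement $f(C)\leq f(C')$. Chaining these two equivalences yields that $C\leq C'$ if and only if $f(C)\leq f(C')$, so $f$ is simultaneously order-preserving and order-reflecting. A bijection enjoying this property is precisely an isomorphism of posets, which is the claim.

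The only points deserving a moment's care, and hence the nearest thing to an obstacle, are that $\leq$ on $\mathbb{C}$ is genuinely a partial order and that $f$ is a global rather than merely fiberwise bijection. Antisymmetry of $\leq$ on $\mathbb{C}$ rests on the injectivity of $f$: if $C\leq C'$ and $C'\leq C$, then $f(C)\subseteq f(C')\subseteq f(C)$, forcing $f(C)=f(C')$ and hence $C=C'$. Reflexivity and transitivity are inherited directly from subgraph containment, as already observed in the excerpt when $(\mathbb{C},\leq)$ was introduced. Once these are in place, the equivalence of the preceding paragraph closes the argument.
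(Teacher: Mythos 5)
Your proposal is correct and follows essentially the same route as the paper, which likewise takes the bijection $f$ of Corollary \ref{bij} as the isomorphism, notes that the order on $\mathbb{C}$ is defined precisely by $f(C)\subseteq f(C')$ so that order preservation and reflection are immediate, and derives antisymmetry from the injectivity of $f$. Your additional remark that the fiberwise bijections glue to a global bijection (since hypercubes of distinct dimensions are non-isomorphic, so the fibers $\mathbb{Q}_n(R(H))$ are disjoint) is a point the paper leaves implicit, but it is the same argument.
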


 We now give an explicit description for the partial ordering on the Clar covers of $H$.

 \begin{thm}\label{order}For any Clar covers $C,C'\in \mathbb{C}$, $C\leq C'$ if and only if the hexagons of $C$ belong to $C'$  as well as $C$ and $C'$ coincide on the edges apart from those in the hexagons of $C'$.
 \end{thm}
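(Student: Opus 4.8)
The plan is to translate the relation $C\le C'$, which is defined by $f(C)\subseteq f(C')$, into the vertex inclusion $V(f(C))\subseteq V(f(C'))$; this is legitimate because $f(C)$ and $f(C')$ are \emph{induced} subgraphs of $R(H)$, so one contains the other exactly when its vertex set does. The single structural fact I would isolate first and reuse throughout is an invariance observation already implicit in the remark that $M_j\oplus E(C)$ meets only the hexagons of $C$: if $h_1,\dots,h_n$ are the hexagons of $C$, then every $M\in V(f(C))$ is matched internally on each $h_i$ and therefore coincides with the isolated edges of $C$ on every edge outside $\bigcup_i E(h_i)$. Equivalently, for an edge $e$ lying in no hexagon of $C$, the membership $e\in M$ is the same for all $M\in V(f(C))$ and equals $e\in E(C)$; the analogous statement holds for $C'$.

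For the \emph{if} direction I would assume (a) and (b), fix an arbitrary $M\in V(f(C))$, and check the two conditions defining membership in $V(f(C'))$. Every hexagon of $C$ is a hexagon of $C'$ by (a) and is automatically $M$-alternating. An isolated edge $e'$ of $C'$ lies in no hexagon of $C'$, so (b) gives $e'\in E(C)$; as $e'$ cannot lie in a hexagon of $C$ (that hexagon would be a hexagon of $C'$ by (a)), it is an isolated edge of $C$ and hence in $M$. The step I expect to be the main obstacle is a hexagon $h'$ of $C'$ that is \emph{not} a hexagon of $C$: its six vertices lie outside the hexagons of $C$ and so are covered in $C$ by isolated edges, which belong to $M$, and I must show each of these edges actually lies on $h'$. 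Here (b) together with the vertex-disjointness of the components of the Clar cover $C'$ does the work: an isolated edge of $C$ incident to $h'$ but not contained in $h'$ would, by (b), be an isolated edge of $C'$ meeting a vertex already saturated by the hexagon component $h'$ of $C'$, which is impossible. Consequently $M$ restricts to a perfect matching of $h'$, so $h'$ is $M$-alternating and $M\in V(f(C'))$.

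For the \emph{only if} direction I would assume $V(f(C))\subseteq V(f(C'))$ and prove (a) and (b) separately. For (a), let $h$ be a hexagon of $C$, pick any $M_1\in V(f(C))$, and set $M_2:=M_1\oplus E(h)$; then $M_1,M_2\in V(f(C))\subseteq V(f(C'))$ and $M_1\oplus M_2=E(h)$, so $M_1M_2$ is an edge of the induced cube $f(C')$. By the isomorphism $f(C')\cong Q_{n'}$ from Lemma \ref{map}, every edge of $f(C')$ corresponds to flipping a single hexagon of $C'$ and hence has symmetric difference $E(h'_j)$ for some hexagon $h'_j$ of $C'$; thus $E(h)=E(h'_j)$ and $h$ is a hexagon of $C'$. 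Once (a) is known, (b) follows at once from the invariance observation: an edge $e$ lying in no hexagon of $C'$ lies in no hexagon of $C$ either, so for any fixed $M\in V(f(C))\subseteq V(f(C'))$ we get $e\in E(C)\Leftrightarrow e\in M\Leftrightarrow e\in E(C')$. Combining the two directions establishes the characterization.
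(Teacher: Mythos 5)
Your proof is correct and follows essentially the same route as the paper's: both directions reduce $C\le C'$ to the vertex inclusion $V(f(C))\subseteq V(f(C'))$, the \emph{only if} part obtains each hexagon $h$ of $C$ as a symmetric difference $M_1\oplus M_2$ of two vertices of $f(C)\subseteq f(C')$ and concludes $h\in C'$, and the \emph{if} part amounts to the paper's observation that the hexagons of $C'\setminus C$ are alternating with respect to every matching in $V(f(C))$. You merely supply details (the invariance observation and the disjointness argument for hexagons of $C'$ not in $C$) that the paper leaves implicit, so no substantive difference.
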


 \begin{proof}If the hexagons of $C$ belong to $C'$  as well as $C$ and $C'$ coincide on the edges apart from those in the hexagons of $C'$, then the hexagons in $C'\setminus C$ are alternating in $C$. Hence $V(f(C))\subseteq V(f(C'))$ and $f(C)\subseteq f(C')$, that is $C\leq C'$. Conversely, suppose   $f(C)\subseteq f(C')$. For any hexagon $h$ in $C$, there are two perfect matchings $M$ and $M'$ of $H$ in $f(C)$ such that $h=M\oplus M'$. Since $M$ and $M'$ are in $f(C')$,  $h$ is also in $C'$. Moreover, it is obvious that $C$ and $C'$ coincide on the edges apart from those in the hexagons of $C'$.
 \end{proof}

 \begin{figure}
\centering
\includegraphics[scale=0.6]{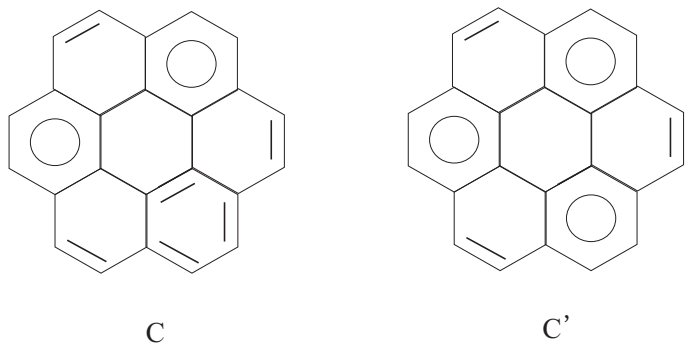}
\caption{Two Clar covers of coronene: $C\leq C'$.}\label{fig_order}
\end{figure}

The theorem shows that a Clar cover $C$ of $H$ is maximal if and only if $C$ has no alternating hexagons in $H$. Accordingly, the isomorphism $f$ between $(\mathbb{C}, \leq)$ and $(\mathbb{Q}, \leq)$ implies the following corollary.

 \begin{cor}There is a one-to-one correspondence between the maximal hypercubes of $R(H)$ and the Clar covers of $H$ that without alternating hexagons.
 \end{cor}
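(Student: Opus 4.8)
The plan is to obtain the corollary as a purely formal consequence of the order isomorphism $f\colon(\mathbb{C},\leq)\to(\mathbb{Q},\leq)$ established above, together with the characterisation of maximal Clar covers that follows from Theorem \ref{order}. All of the genuine content is already in place; what remains is only to match up the maximal elements on the two sides.

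First I would recall that, by the theorem preceding Theorem \ref{order}, the bijection $f$ is an isomorphism of the posets $(\mathbb{C},\leq)$ and $(\mathbb{Q},\leq)$. I would then use the elementary order-theoretic fact that every order isomorphism $\varphi$ restricts to a bijection between the maximal elements of its domain and those of its codomain: if $x$ is maximal and $\varphi(x)\leq y$, then $x\leq\varphi^{-1}(y)$ forces $x=\varphi^{-1}(y)$, whence $\varphi(x)=y$, so $\varphi(x)$ is maximal, and the symmetric argument applied to $\varphi^{-1}$ gives the converse. Applying this to $f$ shows that $f$ maps the maximal elements of $(\mathbb{C},\leq)$ bijectively onto the maximal elements of $(\mathbb{Q},\leq)$.

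Next I would identify the two families of maximal elements. By the definition of $\leq$ on $\mathbb{Q}$ as the induced-subgraph relation, the maximal elements of $(\mathbb{Q},\leq)$ are precisely the maximal hypercubes of $R(H)$. On the other side, the remark recorded immediately after Theorem \ref{order}---that a Clar cover $C$ is maximal exactly when $C$ has no alternating hexagon in $H$---identifies the maximal elements of $(\mathbb{C},\leq)$ with the Clar covers of $H$ without alternating hexagons; I would invoke this directly, it being the translation through Theorem \ref{order} of the condition that $C$ admits no strictly larger Clar cover $C'$. Combining the two paragraphs, $f$ restricts to the desired one-to-one correspondence between the Clar covers of $H$ without alternating hexagons and the maximal hypercubes of $R(H)$.

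Thus the statement carries no real obstacle: the substantive work resides entirely in Theorem \ref{order} and in the order isomorphism, and the proof reduces to the routine observation that order isomorphisms preserve maximality. The only point demanding a moment's attention is ensuring that the definitions align on both ends---that ``maximal hypercube'' is literally ``maximal element of $\mathbb{Q}$'', and that ``without alternating hexagon'' is exactly the negation of the existence of a cover $C'>C$.
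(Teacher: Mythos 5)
Your proposal is correct and follows essentially the same route as the paper: the paper likewise derives the corollary directly from the poset isomorphism $f$ between $(\mathbb{C},\leq)$ and $(\mathbb{Q},\leq)$ together with the remark after Theorem \ref{order} that a Clar cover is maximal precisely when it has no alternating hexagon. You merely make explicit the routine fact that order isomorphisms carry maximal elements to maximal elements, which the paper leaves implicit.
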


Relations between  Clar structures and Clar covers without alternating hexagons have already been discussed in \cite{zzg}.

\section{Some applications}

The Clar covering polynomial and cube polynomial were studied independently in the past. The Clar covering polynomials of many types of hexagonal systems have been obtained explicitly via various approaches \cite{CDG,CLW,CW,CW2,GB,gfb,ZJY,zz1}. Hence, the cube polynomial of their resonance graphs can be obtained by Theorem \ref{identity}.
For example,  $$\zeta(\textrm{pyrene},x)=C(R(\textrm{pyrene}),x)=x^2+6x+6=(x+1)^2+4(x+1)+1,$$ and
$$\begin{array}{cl}
\zeta(\textrm{coronene},x)=C(R(\textrm{coronene}),x) & =2x^3+15x^2+32x+20 \\
 &=2(x+1)^3+9(x+1)^2+8(x+1)+1, \end{array}$$
where the resonance graph $R(\textrm{coronene})$ is illustrated in \cite{ZLS}.

In the following subsections we will present some interesting applications of Theorem \ref{identity}.

Although the cube polynomial was defined for any graphs, its discussions have been concentrated on median graphs that hypercubes play an important role.  A median of a triple of vertices $u, v$ and $w$ of a graph is a vertex that lies on a shortest  $u,v$-path, $u,w$-path  and  $v,w$-path simultaneously. A graph is called a {\em median graph} if every triple of its vertices has a unique median. Zhang et al. \cite{ZLS} proved that the resonance graph of a (weakly) elementary plane bipartite graph is a median graph, which is done by considering a distributive lattice structure on the set of its perfect matchings. A median graph, however, is not necessarily a resonance graph.

\subsection{Fibonacci cube}
A fibonacene is a hexagonal chain in which no hexagons are linearly attached. Klav\v{z}ar and \v{Z}igert \cite{KZ} showed that the resonance graph of a fibonacene with $n$ hexagons $Z_{n}$ is isomorphic to Fibonacci cube $\Gamma_n$, which is the subgraph of the $n$-cube $Q_n$ induced by all binary strings of length $n$ that contain no two consecutive 1s. The Fibonacci cubes were introduced \cite{H} as a model for interconnection networks and  Klav\v{z}ar \cite{K1} gave an extensive researches on it. Also, the chemical graph theory of fibonacenes was studied in \cite{GK}. The Clar covering polynomial of a fibonacene $Z_{n}$ was expressed \cite{z2} in terms of binomial coefficients by matching polynomial of a path with $n+1$ vertices; the cube polynomial of $\Gamma_n$ was derived from generating functions with double variables \cite{KM}. Indeed, they have the same expression:

\begin{equation}\zeta(Z_{n}, x)=C(\Gamma_n,x)=\sum_{k=0}^{\lfloor\frac{n+1}{2}\rfloor}\Big(\begin{array}{c}
                                                       n-k \\
                                                        k
                                                      \end{array}\Big)
(x+1)^k.
\end{equation}

\noindent Besides, the sextet polynomial of $Z_n$ is

$$B(Z_{n}, x)=\sum_{k=0}^{\lfloor\frac{n+1}{2}\rfloor}\Big(\begin{array}{c}
                                                       n-k \\
                                                        k
                                                      \end{array}\Big)
x^k.$$

\subsection{Derivatives of Clar covering polynomials}

 Bre\v{s}ar et al.\cite{BKS} studied the derivatives of  cube polynomials of median graphs, which can be expressed as the cube polynomial of the disjoint union of some subgraphs with median property. Motivated by this, we consider the derivative of the Clar covering polynomial of hexagonal systems directly. 

\begin{thm}If $H$ is a generalized hexagonal system, then $\displaystyle \zeta'(H,x)=\sum_{h}\zeta(H-h,x),$
where the summation goes over all hexagons $h$ of $H$.\end{thm}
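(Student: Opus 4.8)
The plan is to prove the claimed identity at the level of coefficients and then reassemble the two polynomials. Writing $\zeta(H,x)=\sum_{k\ge 0} z(H,k)\,x^{k}$, term-by-term differentiation gives $\zeta'(H,x)=\sum_{k\ge 1} k\,z(H,k)\,x^{k-1}$, while expanding the right-hand side and collecting powers of $x$ yields $\sum_{h}\zeta(H-h,x)=\sum_{m\ge 0}\bigl(\sum_{h} z(H-h,m)\bigr)x^{m}$, where throughout $h$ ranges over all hexagons of $H$ and $H-h$ denotes the generalized hexagonal system obtained by deleting the six vertices of $h$ together with all edges incident to them. Matching the coefficient of $x^{k-1}$ on both sides, it therefore suffices to establish, for every $k\ge 1$, the numerical identity
\begin{equation*}
k\,z(H,k)=\sum_{h} z(H-h,k-1).
\end{equation*}

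To prove this I would exhibit a bijection between two finite sets. On one side, let $\mathcal{A}$ be the set of pairs $(C,h)$ with $C\in\mathbb{Z}(H,k)$ and $h$ a hexagon component of $C$; since every Clar cover in $\mathbb{Z}(H,k)$ has exactly $k$ hexagons, counting pairs gives $|\mathcal{A}|=\sum_{C\in\mathbb{Z}(H,k)}k=k\,z(H,k)$. On the other side, let $\mathcal{B}$ be the set of pairs $(h,C')$ where $h$ is a hexagon of $H$ and $C'\in\mathbb{Z}(H-h,k-1)$, so that $|\mathcal{B}|=\sum_{h} z(H-h,k-1)$. The map I would use sends $(C,h)$ to $\bigl(h,\,C\setminus\{h\}\bigr)$, where $C\setminus\{h\}$ denotes the spanning subgraph obtained from $C$ by removing the hexagon component $h$.

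The key step is to verify that this map is well defined and invertible. Because $h$ is a hexagon component of $C$, its six vertices are covered entirely within $h$, so deleting them leaves every other component of $C$ untouched; consequently $C\setminus\{h\}$ is a spanning subgraph of $H-h$ whose components are hexagons or edges, i.e. $C\setminus\{h\}\in\mathbb{Z}(H-h,k-1)$. Conversely, given $(h,C')\in\mathcal{B}$, the vertices of $h$ are absent from $H-h$, so no edge of $C'$ is incident to them; adjoining $h$ as a new hexagon component produces $C'\cup\{h\}\in\mathbb{Z}(H,k)$ in which $h$ is a hexagon, and this insertion is plainly inverse to the deletion. Thus $\mathcal{A}$ and $\mathcal{B}$ are in bijection, the coefficient identity holds, and summing over $k$ gives the theorem.

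I expect no serious obstacle, as the argument is purely combinatorial; the one point demanding care is the bookkeeping around the meaning of $H-h$. One must delete the \emph{vertices} of $h$ (not merely its edges), both so that $H-h$ is again a generalized hexagonal system and so that hexagon deletion and insertion are genuinely mutually inverse. This is precisely why the statement is phrased for generalized hexagonal systems, a class closed under vertex deletion, rather than for hexagonal systems alone.
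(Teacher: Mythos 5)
Your proposal is correct and follows essentially the same route as the paper: both count the ordered pairs $(C,h)$, with $C$ a Clar cover having $k$ hexagons and $h$ a hexagon component of $C$, in two ways to get $k\,z(H,k)=\sum_h z(H-h,k-1)$ and then sum over $k$. Your only addition is to spell out the deletion/insertion bijection behind the paper's one-line claim that fixing $h$ yields $z(H-h,k-1)$ covers, which is a harmless (and careful) elaboration, not a different argument.
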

\begin{proof} We count the ordered pairs $(C,h)$, where $C$ is a Clar cover of $H$ with $k$ hexagons and $h$ is a hexagon of $C$, in two different ways. Since each Clar cover $C$ is counted $k$ times, we obtain the number $kz(H,k)$ of ordered pairs. On the other hand, when $h$ is fixed, the number of covers $C$ containing $h$ is equals to $z(H-h,k-1)$. Hence, the total amount is $\displaystyle\sum_h z(H-h,k-1)$. Therefore,  $\displaystyle kz(H,k)=\sum_h z(H-h,k-1)$ for each positive integer $k$ and the result follows.
\end{proof}

The theorem can be applied repeatedly and we obtain the following corollary about high  derivatives of  Clar covering polynomials of hexagonal systems.

\begin{cor} If $H$ is a hexagonal system, then $\zeta^{(s)}(H,x)=\sum_{\mathcal{R}_s}\zeta(H-\mathcal{R}_s,x),$ where the summation goes over all sextet patterns $\mathcal{R}_s$ with $s$ hexagons.
\end{cor}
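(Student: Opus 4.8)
The plan is to induct on $s$, using the preceding theorem both as the base case and as the engine of the inductive step. For $s=1$ a sextet pattern $\mathcal{R}_1$ is a single hexagon $h$ and $H-\mathcal{R}_1=H-h$, so the claim reduces to $\zeta'(H,x)=\sum_h\zeta(H-h,x)$. Assuming the identity for $s-1$, I would differentiate once more and apply the preceding theorem to each summand $\zeta(H-\mathcal{R}_{s-1},x)$; this is legitimate because every $H-\mathcal{R}_{s-1}$ is again a generalized hexagonal system. This yields
\[
\zeta^{(s)}(H,x)=\sum_{\mathcal{R}_{s-1}}\ \sum_{h}\ \zeta\big(H-\mathcal{R}_{s-1}-h,x\big),
\]
where the inner sum runs over all hexagons $h$ of $H-\mathcal{R}_{s-1}$. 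The remaining task is to reorganize this double sum over pairs $(\mathcal{R}_{s-1},h)$ into a single sum over sextet patterns with $s$ hexagons.

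Three points make the reorganization go through. First, since $h$ is a hexagon of $H-\mathcal{R}_{s-1}$ it is disjoint from every hexagon of $\mathcal{R}_{s-1}$, so $\mathcal{R}_{s-1}\cup\{h\}$ is a set of $s$ pairwise disjoint hexagons. Second, a pair for which $\mathcal{R}_{s-1}\cup\{h\}$ fails to be resonant contributes nothing: then $H-\mathcal{R}_{s-1}-h$ has no perfect matching, and since any Clar cover of a graph induces a perfect matching (keep the edge components and choose a perfect matching of each hexagon component), such a graph has $\zeta\equiv 0$. Hence only genuine sextet patterns $\mathcal{R}_s=\mathcal{R}_{s-1}\cup\{h\}$ survive. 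Third, conversely, for a fixed sextet pattern $\mathcal{R}_s$ and any $h\in\mathcal{R}_s$ the collection $\mathcal{R}_s\setminus\{h\}$ is again resonant, because a perfect matching of $H-\mathcal{R}_s$ together with a perfect matching of the hexagon $h$ shows that $H-(\mathcal{R}_s\setminus\{h\})$ has a perfect matching; so each $\mathcal{R}_s$ really does arise from legitimate pairs.

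The step I expect to be the main obstacle is the \emph{multiplicity} with which each $\mathcal{R}_s$ is produced, and it is the only genuinely arithmetic point. A fixed sextet pattern $\mathcal{R}_s$ comes from exactly $s$ pairs $(\mathcal{R}_{s-1},h)$, one for each choice of the hexagon $h\in\mathcal{R}_s$ removed at this stage, with $\mathcal{R}_{s-1}=\mathcal{R}_s\setminus\{h\}$. The delicacy is that the single-hexagon removals are naturally ordered whereas the sextet patterns are not, so across the $s$ successive differentiations each pattern is reached by all $s!$ orderings of its hexagons; carrying this factor correctly through the induction is precisely where care is needed. As a clean cross-check I would compute the coefficient of $x^{k-s}$ directly: gluing a Clar cover of $H-\mathcal{R}_s$ with $k-s$ hexagons onto $\mathcal{R}_s$ produces a Clar cover of $H$ with $k$ hexagons in which $\mathcal{R}_s$ is a distinguished $s$-subset of the hexagons, whence $\sum_{\mathcal{R}_s}z(H-\mathcal{R}_s,k-s)=\binom{k}{s}z(H,k)$, which one then matches against $[x^{k-s}]\,\zeta^{(s)}(H,x)=k(k-1)\cdots(k-s+1)\,z(H,k)$. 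This comparison isolates exactly the multiplicity factor $s!$ that the ordered-versus-unordered bookkeeping must account for.
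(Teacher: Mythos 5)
Your instinct that the multiplicity is ``the only genuinely arithmetic point'' is exactly right, but you should press it to its conclusion: the two coefficient computations in your cross-check are both correct, and they \emph{disagree}. Gluing gives $\sum_{\mathcal{R}_s} z(H-\mathcal{R}_s,k-s)=\binom{k}{s}z(H,k)$, while $[x^{k-s}]\,\zeta^{(s)}(H,x)=k(k-1)\cdots(k-s+1)\,z(H,k)=s!\binom{k}{s}z(H,k)$. So under the standard reading of ``sextet pattern'' (an unordered set of pairwise disjoint hexagons whose removal leaves a perfectly matchable graph), the corollary as printed is false for every $s\geq 2$; the correct identity is $\zeta^{(s)}(H,x)=s!\sum_{\mathcal{R}_s}\zeta(H-\mathcal{R}_s,x)$, or equivalently the sum must run over \emph{ordered} $s$-tuples of disjoint resonant hexagons. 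A concrete check: for pyrene $\zeta(H,x)=x^2+6x+6$, so $\zeta''(H,x)=2$, yet there is a unique sextet pattern $\mathcal{R}_2$ with two hexagons, and $H-\mathcal{R}_2$ is a $4$-vertex graph with one perfect matching and no hexagon, so the right-hand side equals $1$.

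Compared with the paper, your proposal is actually the more careful document: the paper offers no proof beyond the sentence ``The theorem can be applied repeatedly,'' and that is precisely where the error enters, since iterating the derivative theorem naturally produces a sum over ordered sequences $(h_1,\dots,h_s)$ of deleted hexagons, and passing to unordered patterns costs the factor $s!$ that the printed statement silently drops. With the corrected statement as inductive hypothesis, your scheme closes without difficulty: differentiate, apply the $s=1$ theorem to each $\zeta(H-\mathcal{R}_{s-1},x)$ (legitimate, as you note, since $H-\mathcal{R}_{s-1}$ is a generalized hexagonal system), discard non-resonant unions because a graph with no perfect matching has no Clar cover at all (your argument that a Clar cover induces a perfect matching is correct, as is the converse point that $\mathcal{R}_s\setminus\{h\}$ is again resonant), and then the per-step multiplicity $s$ compounds to $(s-1)!\cdot s=s!$. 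So the only ``gap'' in your attempt is that you stopped short of saying out loud what your own computation shows: the bookkeeping factor cannot be made to disappear, and the statement needs the $s!$ to be true.
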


\subsection{Roots of Clar covering polynomials}
Gutman et al. \cite{ggf} found good linear correlations
between topological resonance energy  and $\ln \zeta(H,x)$ for fixed values of $x$ lying in the
interval [0, 2].  Gojak et al. \cite{GGRV} further implemented a model relating resonance energy  with  $\sqrt{\zeta(H,x)}$ for  suitable values of $x$. Hence the real roots of Clar covering polynomial is helpful in such researches.

Bre\v{s}ar et al. \cite{BKS2} ever obtained some important properties of   real roots of cube polynomials of median graphs as follows.
\begin{thm}\cite{BKS2}\label{root} Let $G$ be a median graph. Then $C(G,x)$ has no roots in $[-1,+\infty)$.
\end{thm}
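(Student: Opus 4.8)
The plan is to prove the stronger statement that, when written in powers of $(x+1)$, the cube polynomial has only non-negative coefficients and constant term $1$; that is, $C(G,x)=\sum_{k\ge 0}a_k(x+1)^k$ with $a_k\ge 0$ for all $k$ and $a_0=1$. Once this expansion is established the theorem is immediate: for every $x\ge -1$ we have $x+1\ge 0$, so each summand $a_k(x+1)^k$ is non-negative, whence $C(G,x)\ge a_0=1>0$, and therefore $C(G,x)$ has no root in $[-1,+\infty)$.

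To obtain the claimed expansion I would argue by induction on $|V(G)|$, using two ingredients. The first is the derivative formula of Bre\v{s}ar et al.\ \cite{BKS}: for a median graph $G$ one has
\begin{equation}\label{deriv}C'(G,x)=\sum_j C(G_j,x),\end{equation}
where the $G_j$ are median graphs each having strictly fewer vertices than $G$; concretely they are the hyperplane (contraction) subgraphs, one for each $\Theta$-class $F$ of $G$, and since $F$ is a matching such a subgraph has $|F|\le |V(G)|/2<|V(G)|$ vertices. The second ingredient is the normalization $C(G,-1)=1$. This holds because $C(G,-1)=\sum_i(-1)^i\alpha_i(G)$ is precisely the Euler characteristic of the cube complex spanned by $G$, and the cube complex of a median graph is CAT(0), hence contractible, so its Euler characteristic equals $1$.

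With these in hand the induction runs as follows. The base case is a single vertex, where $C(G,x)=1=(x+1)^0$. For the inductive step, apply the induction hypothesis to each $G_j$ in \eqref{deriv} to write $C(G_j,x)=\sum_k a_k^{(j)}(x+1)^k$ with $a_k^{(j)}\ge 0$; summing gives $C'(G,x)=\sum_k c_k(x+1)^k$ with $c_k=\sum_j a_k^{(j)}\ge 0$. Integrating from $-1$ to $x$ and inserting the constant of integration $C(G,-1)=1$ yields
\begin{equation}\label{integ}C(G,x)=1+\sum_{k\ge 0}\frac{c_k}{k+1}(x+1)^{k+1},\end{equation}
so $C(G,x)$ has non-negative $(x+1)$-coefficients and constant term $1$, completing the induction.

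The main obstacle is the derivative formula \eqref{deriv}: one must verify that the hyperplane subgraphs attached to the $\Theta$-classes of a median graph are again median graphs, and that counting pairs (induced $i$-cube, coordinate direction) reassembles $C'(G,x)$ as the sum of their cube polynomials. I would therefore secure this structural fact first, together with the contractibility of the associated cube complex that forces $C(G,-1)=1$; after those two points the passage to the $(x+1)$-expansion in \eqref{integ} and the concluding positivity argument are routine.
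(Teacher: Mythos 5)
Your proposal is correct, and it reaches the theorem by a genuinely different route from the one the paper (and the cited source \cite{BKS2}) relies on. The statement itself is quoted from \cite{BKS2}, and the machinery in this paper that implies it is exactly the stronger fact you target: Theorem \ref{expression} gives $C(G,x)=\sum_{i=0}^m b_i(G)(x+1)^i$ with $b_0(G)=1$ and each $b_i(G)$ a positive integer, proved by induction on $|V(G)|$ via the peripheral convex expansion characterization of median graphs (Theorem \ref{median}) and the recursion of Proposition \ref{recursive}, which for a peripheral expansion collapses to $C(G,x)=C(G',x)+(x+1)C(G_0,x)$; nonnegativity of the $(x+1)$-coefficients is then immediate, and positivity on $[-1,+\infty)$ follows exactly as in your concluding step. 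You instead derive the same expansion from the derivative theorem of \cite{BKS}, namely $C'(G,x)=C(\partial G,x)$ with $\partial G$ a disjoint union of strictly smaller median graphs (one hyperplane graph per $\Theta$-class, with at most $|V(G)|/2$ vertices since a $\Theta$-class is a matching), combined with the normalization $C(G,-1)=1$ and term-by-term integration from $-1$. Both ingredients are sound and citable — the derivative theorem is explicitly invoked later in this very paper — but two cautions are in order. First, do not source $C(G,-1)=1$ from this paper: here $\sum_{i\geq 0}(-1)^i\alpha_i(G)=1$ appears as a corollary \emph{of} Theorem \ref{expression}, so quoting it would be circular; your CAT(0) contractibility argument (via Chepoi's theorem that median graphs are the skeleta of CAT(0) cube complexes) avoids this, though it is far heavier than the one-line induction $C(G^*,-1)=C(G_1,-1)+C(G_2,-1)-C(G_0,-1)=1$ available directly from Proposition \ref{recursive}. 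Second, your integration produces coefficients $c_k/(k+1)$ that are a priori only nonnegative rationals, which suffices for the root statement but is weaker than the paper's conclusion that the coefficients are positive integers; in fact your coefficients coincide with the paper's $b_{k+1}(G)$, consistent with Corollary \ref{expression'} where $b_i(G)=\theta_i/i!$, so the expansion-based induction buys integrality for free, while your calculus route buys a conceptual link between the $(x+1)$-expansion and the derivative graphs $\partial^k G$.
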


\begin{thm}\cite{BKS2} Let $G$ be a median graph. If $C(G,a)=0$ for a rational number $a$, then $a=-\frac{t+1}{t}$ for some integer $t\geq 1$.
\end{thm}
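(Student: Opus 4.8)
The plan is to reduce the statement to the rational root theorem after the substitution $y = x+1$, exploiting a special arithmetic feature of the constant term of the shifted polynomial. Writing $P(y) = C(G,y-1) = \sum_{k\geq 0} c_k y^k$, the roots $a$ of $C(G,x)$ correspond bijectively to the roots $y = a+1$ of $P$. Since $C(G,x)$ has non-negative integer coefficients $\alpha_i(G)$, the binomial shift shows that $P$ again has integer coefficients; its leading coefficient equals $\alpha_d(G)$, where $d$ is the dimension of a largest hypercube of $G$, and its constant term is $c_0 = P(0) = C(G,-1)$.

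The crux is the identity $C(G,-1) = 1$ for every median graph $G$, and this is the only step where the median property is genuinely used. I would prove it by induction on $|V(G)|$ using a convex split of $G$ along a $\Theta$-class (Djokovi\'c--Winkler relation): such a split produces two convex median halves $G^+, G^-$ joined by a matching, together with a convex median boundary $G_0$ (the common image of the matched vertices). Counting the induced hypercubes of $G$ according to whether they lie inside $G^+$, inside $G^-$, or use a matching edge (in which case they are prisms over an $(i-1)$-cube of $G_0$) yields
\begin{equation}
C(G,x) = C(G^+,x) + C(G^-,x) + x\,C(G_0,x).
\end{equation}
Evaluating at $x=-1$ gives $C(G,-1) = C(G^+,-1) + C(G^-,-1) - C(G_0,-1)$, so the inductive hypothesis $C(G^+,-1)=C(G^-,-1)=C(G_0,-1)=1$, with base case the single vertex (where $C(G,x)=1$), forces $C(G,-1)=1+1-1=1$. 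Equivalently, $C(G,-1) = \sum_i (-1)^i\alpha_i(G)$ is the Euler characteristic of the cube complex of $G$, which is contractible, so equals $1$.

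With $c_0 = 1$ in hand, the finish is short. If $a$ is a rational root and $y_0 = a+1 = r/s$ is written in lowest terms with $s>0$, then the rational root theorem applied to $P$ gives $r \mid c_0 = 1$, so $r = \pm 1$. By Theorem \ref{root} the polynomial $C(G,x)$ has no root in $[-1,+\infty)$, hence $a < -1$ and $y_0 = a+1 < 0$; this rules out $r = +1$ and forces $r = -1$. Therefore $y_0 = -1/s$ and $a = y_0 - 1 = -\frac{s+1}{s}$, so setting $t = s \geq 1$ yields the claimed form.

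The main obstacle I anticipate is establishing $C(G,-1)=1$ rigorously rather than the subsequent arithmetic, which is routine. Concretely, the delicate points are verifying that the boundary $G_0$ of a convex split is itself a (smaller) median graph, so that the induction applies, and that the crossing $i$-cubes are counted exactly once by $\alpha_{i-1}(G_0)$ in the displayed recursion; once the recursion and its evaluation at $-1$ are secured, the rational-root step and the appeal to Theorem \ref{root} complete the argument.
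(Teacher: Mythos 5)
Your argument is correct, but note that the paper does not actually prove this statement --- it quotes it from \cite{BKS2} --- so the comparison is with the original source and with machinery the paper develops elsewhere. Your route (shift to $P(y)=C(G,y-1)$, show the constant term $C(G,-1)=1$, apply the rational root theorem, and invoke Theorem \ref{root} to force the numerator to be $-1$) is essentially the original argument of Bre\v{s}ar, Klav\v{z}ar and \v{S}krekovski; the key identity $C(G,-1)=\sum_i(-1)^i\alpha_i(G)=1$ is exactly the relation the paper records as a corollary of \cite{BKS}. Your $\Theta$-class split recursion $C(G,x)=C(G^+,x)+C(G^-,x)+xC(G_0,x)$ is Proposition \ref{recursive} read in reverse: the halves $W_{ab},W_{ba}$ are the two copies $G_1^*,G_2^*$ of an expansion and $U_{ab}\cong G_0$; the delicate points you flag are standard median-graph facts ($W_{ab}$, $W_{ba}$ and $U_{ab}$ are convex, hence median and strictly smaller; the matching is an isomorphism $U_{ab}\to U_{ba}$; a crossing $i$-cube meets the class in a single parallel class of its edges, so it is the prism over a unique $(i-1)$-cube of $U_{ab}$), so the induction closes, and the CAT(0)/Euler characteristic justification you offer as an alternative is also valid, if heavier. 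One simplification worth seeing: the paper's own Theorem \ref{expression} states $C(G,x)=\sum_{i=0}^m b_i(G)(x+1)^i$ with $b_0(G)=1$ and all $b_i(G)$ positive integers, which subsumes both of your auxiliary steps at once --- then $P(y)=\sum_i b_i(G)y^i$ has integer coefficients, constant term $1$, and positive coefficients, so $P(y)>0$ for $y\geq 0$ (making the appeal to Theorem \ref{root} unnecessary), and the rational root theorem immediately gives $y_0=-1/t$, i.e.\ $a=-\frac{t+1}{t}$ with $t\geq 1$. So your proof is sound and matches the standard one; it merely re-derives, via the split induction, what Theorem \ref{expression} (itself proved from Theorem \ref{median} and Proposition \ref{recursive}) delivers directly.
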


\begin{thm}\cite{BKS2}\label{root2} Let $G$ be a nontrivial median graph. Then  $C(G,x)$ has a real root in the interval $[-2,-1)$.
\end{thm}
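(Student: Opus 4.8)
The plan is to locate the root from the single value $C(G,-1)=1$ together with a convex-expansion recursion, \emph{not} by a crude sign change at the two endpoints; the latter is doomed because the root may be tangential. For example $P_3\,\square\,P_3$ is a median graph with $C(G,x)=(2x+3)^2$, so $C(G,-2)=C(G,-1)=1>0$ while the only real root, $-\tfrac{3}{2}$, sits inside $[-2,-1)$ as a \emph{double} root. Thus the target is weaker than ``$C\le 0$ somewhere'': I only need to exhibit a real root that is $\ge -2$, knowing already (by Theorem \ref{root}, together with $\alpha_d(G)>0$ and $C(G,-1)=1$) that $C(G,x)>0$ on $[-1,+\infty)$, so that every real root lies in $(-\infty,-1)$.

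The engine is Mulder's theorem: a nontrivial median graph $G$ arises from a smaller median graph $G'$ by a convex expansion, and choosing a peripheral $\Theta$-class (one always exists) lets me take the base to be all of $G'$. Counting cubes according to whether they lie in the base, in the added layer $G_0'$, or cross the new $\Theta$-class gives $\alpha_i(G)=\alpha_i(G')+\alpha_i(G_0')+\alpha_{i-1}(G_0')$, that is $C(G,x)=C(G',x)+(1+x)\,C(G_0',x)$, where $G_0'\subseteq G'$ is convex. Evaluating at $x=-1$ and inducting gives $C(G,-1)=1$; differentiating once and evaluating at $-1$ gives $C'(G,-1)=\theta(G)$, the number of $\Theta$-classes, which satisfies $\theta(G)\ge\dim(G)=\deg C$ since a top cube $Q_{\dim(G)}$ uses that many distinct $\Theta$-classes.

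I would then run an induction on $|V(G)|$ proving the sharpened claim that the largest real root $\beta(G)$ exists and lies in $[-2,-1)$. The base case is the edge, $C(K_2,x)=x+2$, with $\beta=-2$. If the expansion is a Cartesian product ($G_0'=G'$) then $C(G,x)=(2+x)C(G',x)$ and $\beta(G)=\max\{-2,\beta(G')\}\in[-2,-1)$. If $G_0'$ is a single vertex then $C(G,x)=C(G',x)+(1+x)$, and at $x=\beta(G')$ the right side is $1+\beta(G')<0$ while $C(G,-1)=1>0$, producing a root in $(\beta(G'),-1)\subseteq[-2,-1)$. In the remaining case I evaluate at $\beta(G')$: since $C(G',\beta(G'))=0$, I get $C(G,\beta(G'))=(1+\beta(G'))\,C(G_0',\beta(G'))$; here $1+\beta(G')<0$, so this value is $\le 0$ as soon as $C(G_0',\beta(G'))\ge 0$, and then $C(G,-1)=1>0$ forces a root in $[\beta(G'),-1)\subseteq[-2,-1)$, necessarily the largest since $C(G,\cdot)>0$ to the right of $-1$.

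The step that closes the last case — and the main obstacle — is the monotonicity $\beta(G_0')\le\beta(G')$ for the convex subgraph $G_0'\subseteq G'$, which is exactly what makes $C(G_0',\beta(G'))\ge 0$. Heuristically a convex subgraph is ``smaller'' and so should have its largest root farther from $-1$, and this holds in every example I can check, but a clean proof appears to need its own expansion induction and is precisely where the tangential-root behaviour (the grid above) must be controlled. A more analytic alternative sidesteps monotonicity but trades it for a different hard input: writing $C(G,x)=\alpha_d\prod_j(x-r_j)$ and using $-\sum_j 1/(1+r_j)=C'(G,-1)/C(G,-1)=\theta(G)\ge d$, a pigeonhole forces some $1/(-1-r_j)\ge 1$, i.e.\ $r_j\in[-2,-1)$ — \emph{provided} all roots $r_j$ are real. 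I therefore expect the whole theorem to reduce either to the largest-root monotonicity or to real-rootedness of cube polynomials of median graphs, and establishing one of these to be the crux.
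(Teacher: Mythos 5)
The paper does not actually prove this theorem: it is imported verbatim from \cite{BKS2}, so there is no in-paper argument to compare with, and nothing in the paper supplies the lemma your argument ends up needing. That said, everything you do verify is correct: the peripheral-expansion recursion $C(G,x)=C(G',x)+(x+1)C(G_0,x)$ with $G_0$ convex in $G'$ is exactly Proposition \ref{recursive} specialized as in the proof of Theorem \ref{expression}; $C(G,-1)=1$ and $C'(G,-1)=\theta(G)\geq \dim(G)=\deg C$ follow from it by induction; the base case $K_2$, the Cartesian-product case $C(G,x)=(x+2)C(G',x)$, and the pendant-vertex case $C(G,x)=C(G',x)+(x+1)$ are all handled correctly; and your example $C(P_3\,\square\,P_3,x)=(2x+3)^2$ (indeed $9+12x+4x^2$) rightly shows that a crude sign check at $x=-2$ and $x=-1$ cannot work, since $C(G,-2)$ may be positive with the root in $[-2,-1)$ tangential.

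The gap, however, is genuine and you name it yourself: the generic case of the induction rests entirely on $C(G_0,\beta(G'))\geq 0$, i.e.\ on the monotonicity $\beta(G_0)\leq\beta(G')$ of the largest real root under passage to a convex subgraph, and this is never proved. Worse, it is not a routine verification that got deferred: the natural way to prove that expansion moves the largest root toward $-1$ is to evaluate the same recursion at $\beta(G')$, which requires the very inequality $C(G_0,\beta(G'))\geq 0$ again, so the reduction is circular as it stands. Your fallback is also conditional on an unproved (and unexamined) hypothesis: the pigeonhole step $\sum_j 1/(-1-r_j)=C'(G,-1)/C(G,-1)=\theta(G)\geq d$ correctly forces some $r_j\in[-2,-1)$, but only if all roots are real; a conjugate pair $r,\bar r$ with $\operatorname{Re}(r)=a$ contributes $2(-1-a)/\bigl((1+a)^2+b^2\bigr)$ to the logarithmic derivative at $-1$, which is negative whenever $a>-1$, and Theorem \ref{root} excludes only real roots from $[-1,+\infty)$, so complex roots with real part greater than $-1$ are not ruled out and the estimate cannot be repaired by treating complex pairs generously. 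In sum, you have a correct framework and a precise identification of the obstruction, but the theorem has been reduced to, not derived from, one of two open auxiliary claims (largest-root monotonicity for convex subgraphs, or real-rootedness of cube polynomials of median graphs); the published proof in \cite{BKS2} is precisely what resolves the difficulty you leave open, and it is not reproduced in this paper.
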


By Theorem \ref{identity} together with Theorems \ref{root} to \ref{root2} we immediately have the following results  for the real roots of Clar covering polynomials as follows.

\begin{cor}Let $H$ be a Kekulean hexagonal system. If  a rational number $a$ is a root of $\zeta(H,x)$, then $a=-\frac{t+1}{t}$ for some integer $t\geq 1$.
\end{cor}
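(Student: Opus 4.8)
The plan is to reduce this statement to the corresponding fact about cube polynomials of median graphs, using the polynomial identity already established. First I would apply Theorem \ref{identity} to rewrite $\zeta(H,x)=C(R(H),x)$, so that the rational roots of the Clar covering polynomial of $H$ coincide exactly with the rational roots of the cube polynomial of the resonance graph $R(H)$. This converts the problem entirely into a question about $C(R(H),x)$, for which the cited theory of Bre\v{s}ar et al.\ \cite{BKS2} is available.

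Second, I would verify that $R(H)$ satisfies the hypothesis needed to invoke that theory, namely that it is a median graph. Since a Kekul\'ean hexagonal system is a (weakly) elementary plane bipartite graph, the structural result of Zhang et al.\ \cite{ZLS} recalled above guarantees that its resonance graph is median. With $R(H)$ confirmed to be a median graph, the conclusion follows immediately from the second theorem of Bre\v{s}ar et al.\ \cite{BKS2} stated above: any rational number $a$ with $C(R(H),a)=0$ must be of the form $a=-\frac{t+1}{t}$ for some integer $t\geq 1$. Combining this with the identity from the first step yields that every rational root $a$ of $\zeta(H,x)$ has the asserted form.

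The argument is essentially a formal substitution, so there is no computational obstacle. The one point demanding genuine care is the second step, namely checking that the hypotheses of the rational-root theorem for median graphs actually apply to $R(H)$; this is precisely where the structural characterization of resonance graphs of plane bipartite graphs as median graphs is indispensable, and it is the hinge on which the whole reduction turns. Once that is in place, nothing further is required, since Theorem \ref{identity} transports the conclusion back from $C(R(H),x)$ to $\zeta(H,x)$ without loss.
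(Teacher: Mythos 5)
Your proposal is correct and matches the paper's own argument: the paper derives this corollary immediately from Theorem \ref{identity} combined with the cited rational-root theorem of Bre\v{s}ar et al.\ for median graphs, with the medianness of $R(H)$ supplied (implicitly, via the earlier discussion) by the result of Zhang et al.\ \cite{ZLS}. You merely make explicit the verification that $R(H)$ is a median graph, which the paper leaves tacit, so no substantive difference exists between the two arguments.
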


\begin{cor}Let $H$ be a Kekulean hexagonal system. Then  $\zeta(H,x)$ has no roots in $[-1,+\infty)$ and has a real root in the interval $[-2,-1)$.
\end{cor}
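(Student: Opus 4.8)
The plan is to deduce both assertions from the cube-polynomial results of Bre\v{s}ar et al.\ (Theorems \ref{root} and \ref{root2}) by pushing them through the polynomial identity $\zeta(H,x)=C(R(H),x)$ of Theorem \ref{identity}. The one structural fact I would need is that $R(H)$ is a median graph: since $H$ is a hexagonal system, it is a weakly elementary plane bipartite graph, so by the theorem of Zhang et al.\ \cite{ZLS} recalled at the beginning of this section its resonance graph $R(H)$ is median. With this in hand, Theorems \ref{root} and \ref{root2} become directly applicable to $C(R(H),x)$.

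For the first assertion I would argue in one line. By Theorem \ref{root}, the cube polynomial $C(R(H),x)$ of the median graph $R(H)$ has no root in $[-1,+\infty)$; since $\zeta(H,x)=C(R(H),x)$ by Theorem \ref{identity}, the same is true of $\zeta(H,x)$.

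The second assertion requires invoking Theorem \ref{root2}, whose hypothesis is that the median graph be \emph{nontrivial}, i.e.\ have at least two vertices. This is the only point carrying genuine content, and I would verify it directly from the geometry of $H$. A hexagonal system contains at least one hexagon, and it is classical that any Kekul\'ean hexagonal system possesses a perfect matching $M$ admitting at least one $M$-alternating hexagon $h$ (no perfect matching can freeze every hexagon of a benzenoid). Replacing the matched edges of $h$ by the unmatched ones yields a second perfect matching $M'=M\oplus E(h)$ adjacent to $M$ in $R(H)$, so $|V(R(H))|\geq 2$ and $R(H)$ is nontrivial. Theorem \ref{root2} then produces a real root of $C(R(H),x)$ in $[-2,-1)$, and by Theorem \ref{identity} this is a root of $\zeta(H,x)$ in $[-2,-1)$.

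I expect the existence of the alternating hexagon to be the only step needing an appeal to benzenoid theory; everything else is a transcription of the median-graph statements across the identity of Theorem \ref{identity}. The argument is therefore short, but the nontriviality of $R(H)$ is the hinge on which the second claim turns, and it is worth stating explicitly rather than leaving it implicit.
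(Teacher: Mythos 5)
Your proposal is correct and follows essentially the same route as the paper, which simply combines Theorem \ref{identity} with Theorems \ref{root} and \ref{root2} of Bre\v{s}ar et al., relying on the fact (recalled from \cite{ZLS} at the start of the applications section) that $R(H)$ is a median graph. Your explicit verification that $R(H)$ is nontrivial --- via the classical fact that a Kekul\'ean hexagonal system admits a perfect matching with an alternating hexagon, equivalently $Cl(H)\geq 1$ --- is a point the paper leaves implicit, and you are right to state it.
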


\subsection{A transformation of cube polynomials}

We have known that Clar covering polynomial of a hexagonal system can be expressed in the power of $(x+1)$ such that the coefficients are nonnegative as follows.

\begin{thm}\label{cha02 4}\cite{zz2}
Let $H$ be a Kekul\'ean hexagonal system. Then
$$\zeta(H,x)=\sum_{i=0}^{ Cl(H)}z(H,i)x^{i}=\sum_{i=0}^{Cl(H)}
a(H,i)(x+1)^{i},$$
where $a(H,i)$ denotes the number of perfect matchings of $H$ with exactly $i$ proper sextets.
\end{thm}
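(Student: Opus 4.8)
The plan is to compare coefficients after expanding the claimed right-hand side. Writing $(x+1)^i=\sum_{k}\binom{i}{k}x^k$ and interchanging the two summations, the second expression becomes $\sum_k\big(\sum_{i\ge k}a(H,i)\binom{i}{k}\big)x^k$. Since the first expression is $\sum_k z(H,k)x^k$, the theorem reduces to the single coefficient identity
$$z(H,k)=\sum_{i\ge k}a(H,i)\binom{i}{k}\qquad\text{for every }k\ge 0.$$
I would read the right-hand side combinatorially: a perfect matching with exactly $i$ proper sextets contributes $\binom{i}{k}$ to the term $a(H,i)\binom{i}{k}$, so the whole sum counts the pairs $(M,S)$ in which $M$ is a perfect matching of $H$ and $S$ is a $k$-element set of proper sextets of $M$. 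Thus it suffices to produce a bijection between $\mathbb{Z}(H,k)$ and the set of such pairs.

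First I would define the forward map. Given a Clar cover $C\in\mathbb{Z}(H,k)$ with hexagons $h_1,\dots,h_k$, the labelling $b\colon V(f(C))\to V(Q_k)$ from the proof of Lemma~\ref{map} is a bijection, so there is a unique vertex $M^\ast\in V(f(C))$ with $b(M^\ast)=1\cdots1$; equivalently $M^\ast$ is the unique perfect matching in the fibre $f(C)$ for which every $h_j$ is a proper sextet. I send $C$ to the pair $(M^\ast,\{h_1,\dots,h_k\})$, which is legitimate because the $h_j$ are proper sextets of $M^\ast$ by construction. For the backward map, given a pair $(M,S)$ with $S=\{g_1,\dots,g_k\}$ a set of proper sextets of $M$, Lemma~\ref{proper} guarantees the $g_j$ are pairwise disjoint, so the $g_j$ together with the edges of $M$ lying in no $g_j$ form a genuine Clar cover $C\in\mathbb{Z}(H,k)$; I send $(M,S)$ to this $C$.

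Then I would check the two maps are mutually inverse. For the backward-then-forward direction, one verifies $M\in V(f(C))$ (each $g_j$ is $M$-alternating and the isolated edges of $C$ lie in $M$) and that all $g_j$ are proper at $M$, so $M=M^\ast$ by uniqueness of the all-proper label. For the forward-then-backward direction, the hexagons clearly agree, and the isolated edges agree because $M^\ast\in V(f(C))$ contains every isolated edge of $C$, while $M^\ast\setminus\bigcup_j E(h_j)$ and the isolated edge set of $C$ are two perfect matchings of the same vertex set with one contained in the other, hence equal. This establishes the bijection, and therefore the coefficient identity, from which the theorem follows at once.

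I expect the main obstacle to be the careful bookkeeping in showing the two maps are inverse: specifically, pinning down that each Clar cover has exactly one all-proper matching in its fibre $f(C)$ (which is where the proper/improper orientation and Lemma~\ref{map} do the real work) and that the isolated-edge parts are recovered correctly. The disjointness of proper sextets (Lemma~\ref{proper}) is what makes the backward construction well defined, so I would be careful to invoke it precisely when asserting that an arbitrary $k$-subset of the proper sextets of a fixed matching assembles into a legitimate Clar cover.
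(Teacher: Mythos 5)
Your proof is correct. Note, however, that the paper you are being compared against does not actually prove this statement: Theorem~\ref{cha02 4} is imported from \cite{zz2} and stated without proof, so there is no in-paper argument to match. Your bijection --- pairing each Clar cover $C$ with $k$ hexagons with the pair $(M^\ast,S)$, where $M^\ast$ is the unique matching in the fibre $f(C)$ making every hexagon of $C$ proper and $S$ is the hexagon set of $C$ --- is essentially the classical argument behind the $(x+1)$-expansion (rotate every hexagon of a Clar cover into its proper position to get a root matching; conversely, circle any $k$-subset of the proper sextets of a matching), and the coefficient identity $z(H,k)=\sum_{i\geq k}a(H,i)\binom{i}{k}$ you reduce to is exactly the binomial inversion implicit in the cited result (compare Corollary~\ref{inv}). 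Two details you handle correctly and should keep explicit: Lemma~\ref{proper} is what makes the backward map land on a genuine Clar cover, and the fact that an $M$-alternating hexagon is matched internally by $M$ is what guarantees both that no matching edge crosses a sextet boundary in the backward direction and that $M^\ast\setminus\bigcup_j E(h_j)$ perfectly matches the non-hexagon vertices in the forward-then-backward check. A pleasant feature of your write-up is that it recycles the labelling $b$ from Lemma~\ref{map} of this paper to pin down $M^\ast$, so your proof doubles as a derivation of the cited theorem from the paper's own machinery rather than an independent verification.
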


This result enable one to know that the cube polynomial of resonance graph have the same result.  So a question naturally arises: does the cube polynomial of a general median graph have such an expression?  We shall give an affirmative answer to this question in this subsection.

To this end we introduce a convex expansion of median graphs. A pair of induced subgraphs $\{G_1,G_2\}$ of a graph $G$ is called a {\em cubical cover} if $G=G_1\cup G_2$ and each induced hypercube lies in at least one of $G_1$ and $G_2$. The {\em expansion} $G^*$ of $G$ with respect to the cubical cover $\{G_1,G_2\}$ of $G$ is the graph obtained from the disjoint union of copies $G_1^*$ and $G_2^*$ of $G_1$ and $G_2$ by adding an edge between both corresponding vertices in $G_1^*$ and $G_2^*$ of each vertex of $G_0=G_1\cap G_2$.
\begin{pro}\cite{BKS}\label{recursive} Let $G^*$ be a graph constructed by the expansion with respect to the cubical cover $\{G_1,G_2\}$ (over $G_0$). Then $C(G^*,x)=C(G_1,x)+C(G_2,x)+xC(G_0,x)$.
\end{pro}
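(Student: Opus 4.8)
The plan is to compute $C(G^*,x)=\sum_i\alpha_i(G^*)x^i$ by classifying the induced hypercubes of $G^*$ according to how they meet the two copies. By construction of the expansion, the induced subgraph of $G^*$ on $V(G_1^*)$ is exactly $G_1^*\cong G_1$, the induced subgraph on $V(G_2^*)$ is $G_2^*\cong G_2$, and the only edges of $G^*$ joining the two copies form the matching $\mathcal{M}=\{v_1v_2: v\in V(G_0)\}$, where $v_1,v_2$ are the two copies of $v\in V(G_0)$. Thus every induced hypercube $Q$ of $G^*$ lies in exactly one of three families: (a) $V(Q)\subseteq V(G_1^*)$; (b) $V(Q)\subseteq V(G_2^*)$; (c) $V(Q)$ meets both copies. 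First I would show that family (a) contributes $C(G_1,x)$, family (b) contributes $C(G_2,x)$, and family (c) contributes $xC(G_0,x)$; summing these gives the identity.

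Families (a) and (b) are immediate: since $G^*[V(G_1^*)]\cong G_1$ is induced in $G^*$, its induced hypercubes correspond bijectively to those of $G_1$, contributing $\sum_i\alpha_i(G_1)x^i=C(G_1,x)$, and symmetrically for (b). Family (c) is the substantial case. Let $Q\cong Q_n$ be such a hypercube and set $A=V(Q)\cap V(G_1^*)$ and $B=V(Q)\cap V(G_2^*)$, both nonempty. Every edge of $Q$ between $A$ and $B$ belongs to $\mathcal{M}$, so the set $F$ of these crossing edges is a matching, and it is exactly the edge cut of $Q$ separating $A$ from $B$. The key lemma I would establish is that such an $F$ is a single coordinate direction of $Q$: on some coordinate $d$, $A$ and $B$ are the two constant-value halves of $Q_n$ and $F$ consists of all edges that flip $d$.

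To prove the lemma, fix $uv\in F$ with $u\in A$, $v\in B$, flipping coordinate $d$. For any coordinate $e\neq d$, the edge at $u$ flipping $e$ is not in $F$ (otherwise $u$ is matched twice), so its far end lies in $A$; likewise the edge at $v$ flipping $e$ keeps its far end in $B$; the remaining edge of this square flips $d$ and runs from $A$ to $B$, hence lies in $F$. Translating $uv$ by the coordinates $e\neq d$ in this way reaches every $d$-flipping edge, so $F$ is precisely the $d$-direction and $A,B$ are two copies of $Q_{n-1}$ matched vertex-to-vertex. I expect this matching-cut lemma to be the main obstacle, since it is the one step that uses genuine hypercube geometry rather than bookkeeping.

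Granting the lemma, every vertex of $A$ is incident with a crossing edge and is therefore a copy of a vertex of $G_0$; contracting $F$ (equivalently, projecting $A$ into $G$) yields an induced $R\cong Q_{n-1}$ with $V(R)\subseteq V(G_0)$, and since $G_0$ is induced in $G_1$, $R$ is an induced hypercube of $G_0$. Conversely, any induced $Q_{n-1}$ of $G_0$ lifts to its two copies in $G_1^*$ and $G_2^*$, which together with the matching edges between corresponding vertices induce a $Q_n$ meeting both copies. These constructions are mutually inverse, so family (c) is in bijection with the induced $(n-1)$-cubes of $G_0$; recording the dimension shift gives $\sum_n\alpha_{n-1}(G_0)x^n=xC(G_0,x)$, and adding the three contributions yields $C(G^*,x)=C(G_1,x)+C(G_2,x)+xC(G_0,x)$. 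I would also remark that this counting uses only that $G_0$ is an induced subgraph; the full cubical-cover hypothesis is what guarantees $G^*$ and $G$ are genuinely expansion and contraction of one another, and so matters for the surrounding theory rather than for this identity.
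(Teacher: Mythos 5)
Your proof is correct; note that the paper itself states this proposition without proof, citing it from Bre\v{s}ar, Klav\v{z}ar and \v{S}krekovski \cite{BKS}, and your argument is essentially the standard one used there: partition the induced hypercubes of $G^*$ by which copy they meet, with the crossing cubes in bijection with the induced cubes of $G_0$ via the observation that a matching edge-cut of $Q_n$ must be a single coordinate direction. Your matching-cut lemma and the remark that only the inducedness of $G_0$ in $G_1$ and $G_2$ (not the full cubical-cover condition) is needed for this identity are both accurate.
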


An expansion with respect to $\{G_1,G_2\}$ is called a {\em peripheral convex expansion} if $G_1$ is a convex subgraph of $G_2$ (i.e. for any pair of vertices $u$ and $v$ in $G_1$, any shortest $u,v$-path of $G_2$ lies completely in $G_1$). The following result gives a construction for median graphs via such a convex expansion.

\begin{thm}\cite{BKS2}\label{median} Let $G$ be a connected graph. Then $G$ is a median graph if and only
if $G$ can be obtained from the one vertex graph by a sequence of peripheral convex
expansions.
\end{thm}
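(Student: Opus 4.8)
The plan is to prove the two implications separately, in each case reducing the whole statement to the analysis of a single peripheral convex expansion and then iterating. For the \emph{if} direction I would induct on the length of the expansion sequence. The one vertex graph is trivially a median graph, so it suffices to show that a peripheral convex expansion of a median graph $G$ is again median. In such an expansion we have $G_1$ convex in $G_2=G$ and $G_0=G_1$, so $G^*$ is a copy of $G$ together with a copy of the convex subgraph $G_1$, the two copies of $G_1$ being joined by a perfect matching. To check the median property I would take an arbitrary triple of vertices of $G^*$ and distinguish cases according to how many of the three lie in the peripheral copy of $G_1$. In every case the candidate median is read off from the unique median of the projected triple in $G$, and the convexity of $G_1$ is exactly what guarantees that this candidate lies where it must and that no competing median appears; uniqueness in $G$ then transfers to $G^*$.

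For the \emph{only if} direction I would induct on $|V(G)|$, the base case $K_1$ being immediate. The engine is the Djokovi\'c--Winkler relation $\Theta$: a median graph is a partial cube, so its edges split into $\Theta$-classes, and each class $F$ is a convex cut whose deletion separates $G$ into two convex half-spaces $W^-$ and $W^+$ that are matched to one another by the edges of $F$. The key notion is that of a \emph{peripheral} $\Theta$-class, meaning one for which a half-space, say $W^+$, consists entirely of boundary vertices, so that every vertex of $W^+$ is incident to an edge of $F$. Granting such a class, contracting $F$ (identifying the two ends of each of its edges) returns the half-space $W^-$; being a convex subgraph of a median graph, $W^-$ is itself median and has strictly fewer vertices. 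The boundary $\partial W^-$ is convex in $W^-$ and is isomorphic, via $F$, to $W^+$, so $G$ is precisely the peripheral convex expansion of $W^-$ with respect to $\{\partial W^-, W^-\}$. The induction hypothesis expresses $W^-$ through peripheral convex expansions of $K_1$, and appending this last expansion yields $G$.

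The main obstacle is the existence of a peripheral $\Theta$-class in every nontrivial median graph. I would produce one by choosing, among all half-spaces determined by all $\Theta$-classes (and both of their sides), a half-space $W^+$ that is minimal with respect to inclusion, and then arguing that its defining class is peripheral. If some vertex of $W^+$ avoided the boundary, a geodesic from it to $\partial W^+$ would supply an edge $xy$ inside $W^+$ lying in another class $F'$; the crossing-versus-nesting dichotomy for $\Theta$-classes in a median graph, together with a single application of the median operation, then forces one half-space of $F'$ to sit strictly inside $W^+$, contradicting minimality. Turning this dichotomy into a watertight argument, and then verifying that $F$ really does induce a clean perfect matching between $W^+$ and $\partial W^-$ so that the reconstruction is literally the peripheral convex expansion defined above, is where the delicate partial-cube bookkeeping lives; by contrast the casework in the sufficiency direction and the convexity of $\partial W^-$ are routine once the peripheral class is secured.
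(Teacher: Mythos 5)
This theorem is not proved in the paper at all: it is quoted from \cite{BKS2}, and it is in fact Mulder's classical peripheral convex expansion theorem, so your argument can only be measured against the standard literature proof --- which it reconstructs essentially correctly. Your sufficiency direction (induction on the expansion sequence, casework on how a triple of vertices of $G^*$ meets the peripheral copy of $G_1$, with convexity of $G_1$ guaranteeing that medians of projected triples land in $G_1$ when they must) is the routine half, and your reading of the paper's definition is right: $G_1\subseteq G_2=G$ forces $G_0=G_1$, so $G^*$ is $G$ plus a matched pendant copy of $G_1$, and the copy of $G$ is convex in $G^*$, so uniqueness of medians transfers. The crux is exactly where you locate it: existence of a peripheral $\Theta$-class. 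Your minimality argument is the standard one and is sound; to make it watertight, take the edge $xy$ with $x\in U^{+}$ and $y\in W^{+}\setminus U^{+}$ (using connectivity of the convex half-space $W^{+}$), suppose $z\in W_{yx}\cap W^{-}$, and let $m$ be the median of $(z,y,x')$ where $x'$ is the $F$-neighbour of $x$: since $d(y,x')=2$ and $I(z,x')\subseteq W^{-}$ by convexity, $m$ is a common neighbour of $y$ and $x'$ lying in $W^{-}$, so $ym\in F$ and $y\in U^{+}$, a contradiction; then $W_{yx}\subsetneq W^{+}$ (as $x\notin W_{yx}$) contradicts minimality --- this is precisely your ``single application of the median operation.'' Two ingredients you defer as bookkeeping deserve explicit statement, though both are standard and non-circular: first, that median graphs are partial cubes (so that $\Theta$-classes, matchings $F$, and convex half-spaces exist at all) is Djokovi\'c's theorem and must be proved without appeal to the expansion theorem itself; second, the lemmas that $U^{-}$ is convex in $W^{-}$ and that $F$ induces an isomorphism $W^{+}\cong U^{-}$ are genuinely needed so that $G$ is literally the peripheral convex expansion of $W^{-}$ over $\{U^{-},W^{-}\}$ in the sense defined in Section 4. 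With those supplied, your outline is a correct proof along the classical Mulder/Imrich--Klav\v{z}ar lines.
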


\begin{thm}\label{expression}Let $G$ be a median graph with the maximum dimension $m$ of hypercubes contained in $G$. Then $C(G,x)=\sum_{i=0}^mb_i(G)(x+1)^i$, where $b_0(G)=1$ and $b_i(G)$ is a positive integer for each $0\leq i\leq m$.
\end{thm}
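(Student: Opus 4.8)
The plan is to prove this by induction on the number of vertices of $G$, combining the structural characterization in Theorem~\ref{median} with the recursion in Proposition~\ref{recursive}. The base case is the one-vertex graph, whose only induced hypercube is a single point, so $C(K_1,x)=1=(x+1)^0$; here $m=0$ and $b_0=1$, as required.

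For the inductive step I would take a median graph $G^{*}$ with more than one vertex. By Theorem~\ref{median} it is the peripheral convex expansion of a median graph $G$ with respect to a cubical cover $\{G_1,G_2\}$ in which $G_1$ is a convex subgraph of $G_2$. The key observation is that a peripheral expansion forces $G_1\subseteq G_2$, whence $G_2=G_1\cup G_2=G$ and $G_0=G_1\cap G_2=G_1$. Feeding this into Proposition~\ref{recursive} collapses the three-term recursion to the single clean identity
$$C(G^{*},x)=C(G,x)+(x+1)\,C(G_1,x).$$
Both $G$ and $G_1$ have fewer vertices than $G^{*}$, and both are median graphs ($G$ as an earlier graph of the expansion sequence, $G_1$ because a convex subgraph of a median graph is itself median), so the induction hypothesis applies to each. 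I would write $C(G,x)=\sum_{i=0}^{m'}b_i(G)(x+1)^i$ and $C(G_1,x)=\sum_{i=0}^{m''}b_i(G_1)(x+1)^i$, where $b_0=1$ and all coefficients are positive integers up through the top degrees $m'$ and $m''$.

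Comparing coefficients of $(x+1)^i$ on both sides of the collapsed recursion gives
$$b_i(G^{*})=b_i(G)+b_{i-1}(G_1),$$
with the conventions $b_i(G)=0$ for $i\notin[0,m']$ and $b_{i-1}(G_1)=0$ for $i\notin[1,m''+1]$. From here the argument is pure bookkeeping. Each $b_i(G^{*})$ is a sum of nonnegative integers, hence a nonnegative integer; $b_0(G^{*})=b_0(G)=1$; for $0\le i\le m'$ the first summand is at least $1$ and for $1\le i\le m''+1$ the second summand is at least $1$, so $b_i(G^{*})\ge 1$ for every $i$ in $[0,m']\cup[1,m''+1]=[0,\max(m',m''+1)]$, while $b_i(G^{*})=0$ beyond that range. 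This simultaneously delivers positivity of the coefficients up to the top degree and identifies $\max(m',m''+1)$ as that degree, which (since a change of basis from $x^i$ to $(x+1)^i$ preserves degree) is exactly the maximum hypercube dimension $m$ of $G^{*}$.

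The only genuine friction I anticipate is at two points. First, I must justify that the induction hypothesis really does apply to $G_1$, i.e. invoke the standard fact that a convex subgraph of a median graph is again a median graph, so that $C(G_1,x)$ admits the claimed $(x+1)$-expansion. Second, I need to confirm that the index set $[0,m']\cup[1,m''+1]$ has no gap, so that positivity holds on the \emph{full} interval $[0,m]$; this is immediate since $m'\ge 0$ forces the two intervals to overlap or abut at the point $1$. Once these are in place, the values $b_0=1$, the positivity of every $b_i$ for $0\le i\le m$, and their integrality all fall directly out of the coefficient recursion, completing the induction.
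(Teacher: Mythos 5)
Your proposal is correct and takes essentially the same route as the paper: induction on the number of vertices using Theorem~\ref{median}, collapsing Proposition~\ref{recursive} under a peripheral expansion (where $G_2=G$ and $G_0=G_1$) to $C(G^{*},x)=C(G,x)+(x+1)C(G_1,x)$, and reading off the coefficient recursion $b_i(G^{*})=b_i(G)+b_{i-1}(G_1)$. Your extra care --- invoking that a convex subgraph of a median graph is median, and checking that the index range $[0,m']\cup[1,m''+1]$ has no gap so positivity holds on all of $[0,m]$ --- merely makes explicit details the paper's proof leaves implicit.
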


\begin{proof}   We proceed by induction on the number of vertices of median graph $G$. Obviously, $C(K_1,x)=1$ and $C(K_2,x)=2+x=1+(x+1)$, which show the basis of induction. So suppose  $G$ is a median graph with more than $K_2$. By Theorem \ref{median} $G$ can be constructed from a median graph $G'$ by a peripheral convex expansion with respect to $G_0$. By Proposition \ref{recursive}, we have
$$C(G,x)=C(G',x)+(x+1)C(G_0,x).$$
Since $G'$ and $G_0$ are median graphs which are smaller than $G$, by induction hypothesis we have $C(G',x)=\sum_{i\geq 0}b_i(G')(x+1)^i$ and $C(G_0,x)=\sum_{i\geq 0}b_i(G_0)(x+1)^i$ satisfying the conditions of the theorem.  Then $C(G,x)=\sum_{i\geq 1}(b_i(G')+b_{i-1}(G_0)) (x+1)^i+b_0(G')$. We can see $b_0(G)=b_0(G')=1$, and $b_i(G)=b_i(G')+b_{i-1}(G_0)$ is a positive integer for each $i\leq m$.
\end{proof}

Further we will determine the coefficients $b_i(G)$ of this novel expression of cube polynomial $C(G,x)$ and give their combinatorial explanation.

For a median graph $G$,  from Theorem \ref{expression} we have the following inversion formulas.
\begin{cor}\label{inv}
(i) $\alpha_i(G)=\sum_{k=0}^mb_k(G)\Big(\begin{array}{c}k \\i\end{array}\Big),i=0,1,2,...,m,$ and\\
(ii) $b_j(G)=\sum_{k=0}^m(-1)^{k-j}\Big(\begin{array}{c}k \\j\end{array}\Big)\alpha_k(G),j=0,1,2,...,m.$
\end{cor}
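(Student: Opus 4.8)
The plan is to read off both formulas directly from the single polynomial identity supplied by Theorem \ref{expression}. Combining the definition of the cube polynomial with that theorem gives
$$\sum_{i\geq 0}\alpha_i(G)x^i = C(G,x) = \sum_{k=0}^m b_k(G)(x+1)^k,$$
and since two polynomials coincide exactly when all their coefficients agree, each part will follow by expanding one side appropriately and matching coefficients. No additional machinery beyond the binomial theorem is needed.

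For part (i), I would expand each summand on the right by $(x+1)^k=\sum_{i=0}^k\binom{k}{i}x^i$, substitute, and interchange the order of summation to isolate the coefficient of $x^i$. That coefficient equals $\sum_{k=i}^m\binom{k}{i}b_k(G)$; because $\binom{k}{i}=0$ whenever $k<i$, the lower limit can be widened to $0$ without changing the value, and comparing with the coefficient $\alpha_i(G)$ on the left yields (i) at once.

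For part (ii), which is the binomial inversion of (i), I would instead perform the substitution $y=x+1$ (equivalently $x=y-1$) in the same identity, so that the right-hand side becomes the single power series $\sum_k b_k(G)y^k$ while the left becomes $\sum_i\alpha_i(G)(y-1)^i$. Expanding $(y-1)^i=\sum_{k=0}^i(-1)^{i-k}\binom{i}{k}y^k$ and comparing the coefficient of $y^k$ on both sides gives $b_k(G)=\sum_{i=k}^m(-1)^{i-k}\binom{i}{k}\alpha_i(G)$; extending the lower limit to $0$ (again harmless, since $\binom{i}{k}=0$ for $i<k$) and relabelling the indices delivers (ii) in the stated form.

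I expect no genuine obstacle here, as both statements reduce to elementary coefficient comparisons once Theorem \ref{expression} is granted. The only points that warrant a little care are the bookkeeping on the summation ranges—justifying that the natural bounds $k\geq i$ in (i) and $i\geq k$ in (ii) may be replaced by $0\leq k\leq m$ because the binomial coefficients vanish outside those ranges—and the observation that (i) and (ii) are automatically inverse transformations, which is guaranteed by the fact that both are merely two readings of the \emph{same} polynomial identity rather than independent claims requiring a separate consistency check.
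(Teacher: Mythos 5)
Your proposal is correct and matches the paper's intent exactly: the paper states Corollary \ref{inv} without proof, presenting it as an immediate consequence of Theorem \ref{expression}, and your coefficient comparison in $x$ for (i) together with the substitution $y=x+1$ for (ii) is precisely the routine binomial-inversion argument being left implicit. Your care with the summation ranges (extending to $0\leq k\leq m$ via vanishing binomial coefficients) is the only bookkeeping the paper glosses over, and you handle it correctly.
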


On the other hand, Bre\v{s}ar et al. \cite{BKS} introduced high derivative graph $\partial^k G$ of a median graph $G$ and obtained that $C^{(k)}(G,x)=C(\partial^k G,x)$.  Let  $\theta_i(G)$ denote the number of components in $\partial^i(G)$,  $i\geq 0$.   Corollary 10 of \cite{BKS} shows that for each $i\geq 0$, $\theta_i(G)$ can be expressed as
$$\theta_i(G)=i! \sum_{k\geq 0}(-1)^{k-i}\Big(\begin{array}{c}k \\i\end{array}\Big)\alpha_k(G).$$
Compared the above equation with the Eqs.(ii)  in Corollary \ref{inv}, we immediately have $b_i(G)=\theta_i/i!,$ and have the following results.
\begin{cor}\label{expression'}Let $G$ be a median graph. Then $C(G,x)=\sum_{i\geq 0}\frac{\theta_i}{i!}(x+1)^i$.
\end{cor}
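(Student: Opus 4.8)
The plan is to match two already-derived expressions for the same alternating binomial sum and simply read off the coefficients; no new structural argument is needed. First I would recall from Theorem \ref{expression} that the cube polynomial admits the expansion $C(G,x)=\sum_{i\geq 0}b_i(G)(x+1)^i$, so it suffices to identify each coefficient $b_i(G)$ with $\theta_i(G)/i!$. The inversion formula in Corollary \ref{inv}(ii) already supplies the closed form
$$b_i(G)=\sum_{k\geq 0}(-1)^{k-i}\binom{k}{i}\alpha_k(G),$$
where the summation may be extended to all $k\geq 0$ since $\alpha_k(G)=0$ once $k$ exceeds the maximum hypercube dimension $m$.

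Next I would invoke the count of components of the high derivative graph. Combining the identity $C^{(k)}(G,x)=C(\partial^k G,x)$ of Bre\v{s}ar et al.\ with Corollary 10 of \cite{BKS} gives
$$\theta_i(G)=i!\sum_{k\geq 0}(-1)^{k-i}\binom{k}{i}\alpha_k(G).$$
Since the two displayed alternating sums are literally identical, one obtains at once $\theta_i(G)=i!\,b_i(G)$, that is, $b_i(G)=\theta_i(G)/i!$. Substituting this back into the expansion of Theorem \ref{expression} yields $C(G,x)=\sum_{i\geq 0}\frac{\theta_i(G)}{i!}(x+1)^i$, which is the assertion.

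The argument is essentially a bookkeeping step, so I do not anticipate a genuine obstacle; the only point requiring a moment's care is the consistency of the index ranges, the expansion of Theorem \ref{expression} being written up to $i=m$ while the formula for $\theta_i(G)$ is stated formally for all $i\geq 0$. This causes no difficulty, however, because $\alpha_k(G)$, and hence both $b_i(G)$ and $\theta_i(G)$, vanish for $i>m$. As a cleaner self-contained alternative that sidesteps quoting Corollary 10 of \cite{BKS}, one could instead prove $\theta_i(G)=i!\,b_i(G)$ intrinsically: differentiate $C(G,x)=\sum_i b_i(G)(x+1)^i$ exactly $i$ times and evaluate at $x=-1$ to isolate $i!\,b_i(G)=C^{(i)}(G,-1)$, then use $C^{(i)}(G,x)=C(\partial^i G,x)$ together with the fact that evaluating the cube polynomial of a median graph at $x=-1$ returns its number of connected components, so $C(\partial^i G,-1)=\theta_i(G)$. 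This variant relies on those two additional (known) facts about median graphs rather than only on results stated in the excerpt, so I would present the direct comparison above as the primary proof.
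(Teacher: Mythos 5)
Your primary argument is exactly the paper's proof: the paper likewise obtains $b_i(G)=\theta_i(G)/i!$ by comparing Corollary~\ref{inv}(ii) with the formula for $\theta_i(G)$ from Corollary~10 of \cite{BKS}, and then substitutes into Theorem~\ref{expression}. The proposal is correct, including your care with the index ranges, so nothing further is needed.
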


\begin{cor}For a Kekul\'ean hexagonal system $H$, we have that  for $i\geq 0$
\begin{equation}i!a(H,i)=\theta_i(R(H)).\end{equation}
\end{cor}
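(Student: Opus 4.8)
The plan is to combine the identity of Theorem~\ref{identity} with the two distinct expansions of the relevant polynomials in powers of $(x+1)$, and then simply to read off the coefficients. The crucial observation is that both sides can be written against the same basis $\{(x+1)^i\}_{i\geq 0}$, so matching coefficients is immediate once the two expansions are in hand.

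First I would invoke Theorem~\ref{identity}, which gives $\zeta(H,x)=C(R(H),x)$. On the left-hand side, Theorem~\ref{cha02 4} supplies the expansion $\zeta(H,x)=\sum_{i\geq 0}a(H,i)(x+1)^i$, where $a(H,i)$ counts the perfect matchings of $H$ with exactly $i$ proper sextets. On the right-hand side, I would note that $R(H)$ is a median graph (as established in \cite{ZLS} for resonance graphs of elementary bipartite plane graphs, and noted earlier in this paper), so Corollary~\ref{expression'} applies and yields $C(R(H),x)=\sum_{i\geq 0}\frac{\theta_i(R(H))}{i!}(x+1)^i$.

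Equating the two expansions produces $\sum_{i\geq 0}a(H,i)(x+1)^i=\sum_{i\geq 0}\frac{\theta_i(R(H))}{i!}(x+1)^i$. Since the polynomials $(x+1)^i$, $i\geq 0$, are linearly independent, I would compare the coefficient of $(x+1)^i$ on each side to obtain $a(H,i)=\theta_i(R(H))/i!$, which rearranges at once to the claimed identity $i!\,a(H,i)=\theta_i(R(H))$ for every $i\geq 0$.

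I do not anticipate any serious obstacle: the statement is essentially a transcription of the coefficient identity already implicit in Corollary~\ref{expression'} once $\zeta(H,x)$ is identified with $C(R(H),x)$. The only point requiring genuine care is verifying that $R(H)$ falls under the hypotheses of Corollary~\ref{expression'}, that is, that it is indeed a median graph; this is guaranteed for Kekul\'ean hexagonal systems by the cited result of Zhang et al.\ \cite{ZLS}. After that check, the argument reduces to a uniqueness-of-coefficients observation, so the corollary follows with essentially no further computation.
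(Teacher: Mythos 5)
Your proposal is correct and takes essentially the same route as the paper: the corollary is obtained there implicitly by combining Theorem~\ref{identity} with the $(x+1)$-expansion of $\zeta(H,x)$ from Theorem~\ref{cha02 4} and the expansion $C(R(H),x)=\sum_{i\geq 0}\frac{\theta_i}{i!}(x+1)^i$ of Corollary~\ref{expression'}, which applies since $R(H)$ is a median graph by the result of \cite{ZLS}, and then comparing coefficients in the basis $\{(x+1)^i\}$. Your one point of ``genuine care''---checking the median-graph hypothesis---is exactly the check the paper relies on, so nothing is missing.
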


As an application of Theorem \ref{expression} we now discuss the coefficients  $\alpha_i(G)$ of cube polynomial. First Corollary \ref{inv}(ii) together with $b_0(G)=1$ immediately implies the ollowing well-known result.

\begin{cor}\cite{BKS} Let $G$ be a median graph. Then $\sum_{i\geq 0}(-1)^i\alpha_i(G)=1$.
\end{cor}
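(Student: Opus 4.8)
The plan is to obtain the identity as a one-line specialization of the inversion formula already established in Corollary~\ref{inv}(ii), combined with the normalization supplied by Theorem~\ref{expression}. Recall that part (ii) of that corollary reads $b_j(G)=\sum_{k=0}^m(-1)^{k-j}\binom{k}{j}\alpha_k(G)$ for every $j$ with $0\le j\le m$, where $m$ is the maximum dimension of a hypercube contained in $G$. The entire content of the corollary is captured by evaluating this at the single index $j=0$, so I would not develop any new machinery.

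Setting $j=0$, I would use that $\binom{k}{0}=1$ for all $k$ and that $(-1)^{k-0}=(-1)^k$, so that the right-hand side collapses to $\sum_{k=0}^m(-1)^k\alpha_k(G)$. On the left-hand side, Theorem~\ref{expression} gives the constant term $b_0(G)=1$. Equating the two sides therefore yields $\sum_{k=0}^m(-1)^k\alpha_k(G)=1$, which is the desired identity over the finite range $0\le k\le m$.

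It remains only to pass from the finite sum to the formally infinite one appearing in the statement. Since $m$ is by definition the largest dimension of an induced hypercube in $G$, we have $\alpha_i(G)=0$ for every $i>m$; hence the tail adds nothing and $\sum_{i\ge 0}(-1)^i\alpha_i(G)=\sum_{k=0}^m(-1)^k\alpha_k(G)=1$. There is no genuine obstacle here, as all the work has already been done in the expansion $C(G,x)=\sum_{i=0}^m b_i(G)(x+1)^i$ and its inversion. For an even more transparent view, one may instead substitute $x=-1$ directly into that expansion: every term $(x+1)^i$ with $i\ge 1$ vanishes, leaving $C(G,-1)=b_0(G)=1$, while reading off $C(G,-1)$ from the defining form $C(G,x)=\sum_{i\ge 0}\alpha_i(G)x^i$ gives exactly $\sum_{i\ge 0}(-1)^i\alpha_i(G)$.
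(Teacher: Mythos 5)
Your proposal is correct and is precisely the paper's own argument: the authors derive this corollary by combining Corollary~\ref{inv}(ii) at $j=0$ with the normalization $b_0(G)=1$ from Theorem~\ref{expression}, exactly as you do (your alternative remark of substituting $x=-1$ into $C(G,x)=\sum_{i=0}^m b_i(G)(x+1)^i$ is just a restatement of the same computation). Nothing is missing; the handling of the tail via $\alpha_i(G)=0$ for $i>m$ is a fine, if routine, point of care.
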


In an analogous manner as \cite{zz2}, applying Corollary  \ref{inv}(i) we can obtain a monotonic section of the sequence of  coefficients of cube polynomials as follows.

\begin{cor}Let $G$ be a median graph. Then $\alpha_m(G)<\alpha_{m-1}(G)<\cdots<\alpha_{\lceil\frac{m-1}{2}\rceil}(G)$.
\end{cor}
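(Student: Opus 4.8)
The plan is to work directly from the inversion formula in Corollary \ref{inv}(i), namely $\alpha_i(G)=\sum_{k=0}^m b_k(G)\binom{k}{i}$, together with the fact established in Theorem \ref{expression} that every coefficient $b_k(G)$ is a positive integer for $0\le k\le m$. The strategy mirrors the argument used for the Clar covering polynomial in \cite{zz2}: rather than estimating the $\alpha_i(G)$ individually, I would compare consecutive coefficients by forming their difference and controlling its sign.

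First I would write
$$\alpha_i(G)-\alpha_{i+1}(G)=\sum_{k=0}^m b_k(G)\left[\binom{k}{i}-\binom{k}{i+1}\right].$$
The elementary fact driving the whole proof is the sign pattern of the bracketed term: since $\binom{k}{i+1}/\binom{k}{i}=(k-i)/(i+1)$, the difference $\binom{k}{i}-\binom{k}{i+1}$ is strictly positive when $k\le 2i$, equals zero when $k=2i+1$, and becomes negative only when $k\ge 2i+2$. Thus the sum is a nonnegative combination of these terms, and the sole obstruction to positivity comes from indices $k$ with $k\ge 2i+2$.

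Next I would restrict to the range $\lceil\frac{m-1}{2}\rceil\le i\le m-1$ and verify that no such problematic index occurs. A short parity computation shows that $i\ge\lceil\frac{m-1}{2}\rceil$ forces $i>\frac{m}{2}-1$, hence $2i+2>m$; since the summation index never exceeds $m$, every bracketed term is therefore $\ge 0$. It then remains to exhibit one strictly positive contribution. Because each $b_k(G)>0$, it suffices to produce an index $k\in[i,2i]\cap[0,m]$. When $m$ is even (so $i\ge m/2$) the term $k=m$ works, since $\binom{m}{i}-\binom{m}{i+1}>0$; when $m$ is odd with $i=\frac{m-1}{2}$ the term $k=m$ degenerates to zero because $m=2i+1$, so I would instead use $k=m-1=2i$, where $\binom{2i}{i}-\binom{2i}{i+1}>0$ and $b_{m-1}>0$. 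In either case $\alpha_i(G)-\alpha_{i+1}(G)>0$, and running $i$ through the stated range yields the chain of strict inequalities.

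The main obstacle I anticipate is exactly this boundary bookkeeping at $i=\lceil\frac{m-1}{2}\rceil$: for odd $m$ the dominant $k=m$ term vanishes, and one must fall back on the next coefficient, where the positivity guarantee $b_{m-1}>0$ supplied by Theorem \ref{expression} is precisely what saves the argument. The remaining ingredients (the binomial-ratio computation and the inequality $2i+2>m$) are routine, so the heart of the proof lies in combining the strict positivity of the $b_k(G)$ with the unimodality of the binomial coefficients.
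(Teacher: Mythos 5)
Your proposal is correct and takes essentially the same route as the paper: the paper proves this corollary ``in an analogous manner as \cite{zz2}'' by applying Corollary \ref{inv}(i) together with the strict positivity of the coefficients $b_k(G)$ from Theorem \ref{expression}, which is exactly your binomial-difference argument, including the correct handling of the boundary case $k=m=2i+1$ for odd $m$ via $b_{m-1}(G)>0$. (One cosmetic point: $\binom{k}{i}-\binom{k}{i+1}$ is zero, not strictly positive, when $k<i$, but since it is still nonnegative there and you exhibit strict positivity at some $k\in[i,2i]$, this does not affect the argument.)
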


We now propose  the following conjecture for a median graph. When restricted on the Clar covering polynomial of hexagonal systems, the same conjecture was put forward in \cite{zz2} and remains unsolved.

\begin{conj}The sequence of coefficients of cube polynomial of a median graph
are unimodal.
\end{conj}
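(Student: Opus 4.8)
The plan is to attack the conjecture through the two representations of $C(G,x)$ already assembled: the monomial form $\sum_i\alpha_i(G)x^i$ and the positive expansion $\sum_{i=0}^m b_i(G)(x+1)^i$ with every $b_i(G)\ge 1$ (Theorem \ref{expression}), linked by the inversion $\alpha_i(G)=\sum_{k\ge i}b_k(G)\binom{k}{i}$ of Corollary \ref{inv}(i). Since the preceding corollary already supplies the strictly decreasing upper tail $\alpha_m<\alpha_{m-1}<\cdots<\alpha_{\lceil(m-1)/2\rceil}$, it would suffice to prove that the lower portion is nondecreasing, i.e. $\alpha_0\le\alpha_1\le\cdots\le\alpha_{\lceil(m-1)/2\rceil}$; gluing the two halves then yields unimodality. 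My first step would therefore be to study the differences $\alpha_i-\alpha_{i-1}=\sum_{k}b_k(G)\big(\binom{k}{i}-\binom{k}{i-1}\big)$ for $i$ in the lower range, using that the row difference $\binom{k}{i}-\binom{k}{i-1}$ is nonnegative exactly when $k\ge 2i-1$ and that every $b_k\ge 1$, and to try to show that the positive contributions from the large-$k$ terms dominate the negative ones from the small-$k$ terms.

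A second, more structural route is induction along the peripheral convex expansions of Theorem \ref{median}. Each expansion gives $C(G,x)=C(G',x)+(x+1)C(G_0,x)$, which at the level of coefficients reads $\alpha_i(G)=\alpha_i(G')+\alpha_i(G_0)+\alpha_{i-1}(G_0)$ for smaller median graphs $G'$ and $G_0$. One checks that convolution by the factor $(x+1)$ preserves unimodality of the $G_0$-part, so the whole difficulty is concentrated in the sum of the two unimodal sequences $C(G',x)$ and $(x+1)C(G_0,x)$, whose modes may sit at different positions. The plan here is to search for an invariant stronger than bare unimodality that is still inherited under this recursion; natural candidates are log-concavity, an inequality of the form $\alpha_i^2\ge\alpha_{i-1}\alpha_{i+1}$, or quantitative control on the location of the mode relative to $m$.

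The main obstacle, and the reason the statement is only a conjecture, is precisely that neither route closes by itself. On the analytic side $C(G,x)$ is in general not real-rooted, so Newton's inequalities are unavailable, and a nonnegative combination $\sum_k b_k(x+1)^k$ of the symmetric unimodal polynomials $(x+1)^k$ need not be unimodal unless the coefficients are constrained; the hypothesis $b_k\ge 1$ is thus indispensable and must be exploited quantitatively rather than merely qualitatively. On the inductive side the sum of two unimodal, or even log-concave, sequences is generally not unimodal, so plain induction fails, and no invariant is presently known that both forces unimodality and survives the expansion $C(G',x)+(x+1)C(G_0,x)$. I expect the decisive step to be the extraction of additional arithmetic relations among the $b_i(G)$ imposed by the median structure, beyond positivity, that synchronize the peaks of the constituent pieces; producing such relations, or else exhibiting a counterexample, is where the real work lies.
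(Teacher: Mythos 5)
The statement you were asked to prove is posed in the paper only as an open conjecture --- the authors explicitly note that even its restriction to Clar covering polynomials of hexagonal systems, raised earlier by Zhang and Zhang, remains unsolved --- so there is no proof in the paper to compare against, and your proposal, by its own admission, does not close either of its two routes. That honest assessment is accurate as far as the overall status goes, but one concrete step in your first route is already wrong as stated: you claim it would suffice to show $\alpha_0\le\alpha_1\le\cdots\le\alpha_{\lceil (m-1)/2\rceil}$, splicing this onto the paper's strictly decreasing tail $\alpha_m<\alpha_{m-1}<\cdots<\alpha_{\lceil (m-1)/2\rceil}$. While such a reduction would indeed imply unimodality, the targeted inequality is false for median graphs in general, so no amount of work with the differences $\sum_k b_k\bigl(\binom{k}{i}-\binom{k}{i-1}\bigr)$ can establish it. Take $G=Q_6$, a median graph with $m=6$ and $\alpha_i(Q_6)=\binom{6}{i}2^{6-i}$, giving the sequence $64,192,240,160,60,12,1$. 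Here $\lceil (m-1)/2\rceil=3$ but $\alpha_3=160<\alpha_2=240$: the sequence is unimodal with mode $2$, yet it is not nondecreasing up to index $3$. Any viable version of this route must aim at the weaker statement that the initial segment is itself unimodal (with its peak possibly well below $\lceil (m-1)/2\rceil$; for $Q_m$ the mode sits near $m/3$), which is essentially the original conjecture again, so the reduction buys nothing.

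Your second route correctly extracts the coefficient recursion $\alpha_i(G)=\alpha_i(G')+\alpha_i(G_0)+\alpha_{i-1}(G_0)$ from the peripheral convex expansion, and your diagnosis of why plain induction fails --- sums of unimodal (even log-concave) sequences need not be unimodal, and no inductively stable strengthening is known --- is the genuinely hard point; log-concavity itself would need to be verified or refuted for cube polynomials of median graphs before it could serve as the invariant, and real-rootedness is unavailable, as you note. In short: the proposal contains a sound map of the difficulty and matches the paper in treating the statement as open, but it proves nothing, and its one definite mathematical claim (the sufficiency reduction to monotonicity on $[0,\lceil (m-1)/2\rceil]$ being a reasonable target) is refuted by $Q_6$.
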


\end{document}